\definecolor{url}{HTML}{0000FF} 
\definecolor{bluesy}{HTML}{0057e7}
\definecolor{redsy}{HTML}{d62d20}
\definecolor{greensy}{HTML}{008744}
\newcommand\R{\mathbb R}
\newcommand\Kcal{\mathcal K}
\newcommand\Lip{\mathop{\mathrm{Lip}}}
\newcommand\osc{\mathop{\mathrm{osc}}}
\newcommand\eps{\varepsilon}
\newtheorem{theorem}{Theorem}
\newtheorem{proposition}{Proposition}
\newtheorem{remark}{Remark}
\newtheorem{lemma}{Lemma}
\newtheorem{corollary}{Corollary}
\DeclareMathOperator{\loc}{loc}
\renewcommand*\author[1]{%
  \stepcounter{author}%
  \ifnum\c@author=1
    \gdef\@author{#1}%
  \else
    \xdef\@author{\unexpanded\expandafter{\@author\and#1}}%
  \fi
  \csgdef{author@\the\c@author}{#1}}
\newcommand*\email[1]{%
  \csgdef{email@\the\c@author}{#1}}
\newcommand*\address[1]{%
  \csgdef{address@\the\c@author}{#1}}
  \xdef\author@count{\the\c@author}%
\newcommand*\print@authors{%
  \ifnum\c@author>\author@count
  \else
    \print@author{\the\c@author}%
    \advance\c@author by 1
    \expandafter\print@authors
  \fi}
\newcommand*\print@author[1]{%
  \par\medskip
  \begin{tabular}{@{}l@{}}%
    \textsc{\csuse{author@#1}}\\
    \csuse{address@#1}\\
    E-mail address:
    \href{mailto:\csuse{email@#1}}{\csuse{email@#1}}
  \end{tabular}}
\title{Affine rigidity of functions with additive oscillation}
\author{Adolfo Arroyo-Rabasa}
\address{Dipartimento di Matematica\\
Universit\`a di Pisa\\ 
Largo Bruno Pontecorvo 5\\
56127 Pisa, Italy}
\email{adolfo.rabasa@unipi.it}
\author{Sergio Conti}
\address{Institut f\"ur Angewandte Mathematik\\
Universit\"at Bonn\\ 
Endenicher Allee 60\\
53115 Bonn, Germany}
\email{sergio.conti@uni-bonn.de}
\begin{document}
\maketitle

\begin{abstract}
We prove that a locally integrable function $f:(a,b) \to \R$ must be affine if its mean oscillation, considered as a function of intervals, can be extended to a locally finite Borel measure. 
In particular, we show that any function $f$  satisfying the integro-differential identity $|Df|(I)=4\osc(f,I)$ for {all} intervals $I \subset {(a,b)}$ must be affine.\\
 
\end{abstract}

\section{Introduction}

Let $a,b \in \R$ be two reals satisfying $a < b$ and let $f : (a,b) \subset \R \to \R$ be a locally integrable function. For an interval $I \subset
\subset (a,b)$, we define the mean value of $f$ over $I$ as
$$
(f)_I \coloneqq \frac{1}{|I|} \int_I f(x) \, dx = \fint_I f(x) \, dx.
$$
The oscillation of $f$ on $I$ measures the averaged deviation from its mean value:
$$
\osc(f, I) \coloneqq \fint_I |f - (f)_I| \, dx.
$$
Since $f$ is locally integrable, its distributional derivative $Df$ is well-defined as a linear functional on $C_c^\infty((a,b))$. The total variation of $Df$ on $I$ is given by the (extended) dual norm:
$$
|Df|(I) \coloneqq \sup \left\{ \int_I f \varphi' \, dx : \varphi \in C_c^\infty(I), \|\varphi\|_{L^\infty(I)} \leq 1 \right\} \in [0,\infty]\,.
$$
We say that $f$ has bounded  variation on $I$ if $|Df|(I) < \infty$. {We then denote such a function as $f \in BV(I)$, and if this holds for all $I \subset \subset (a,b)$, we say $f \in BV_{\text{loc}}((a,b))$.}

The Poincaré-Wirtinger inequality states that 
the oscillation of $f$ is controlled by its total variation:

\begin{equation*}
\osc(f,I) \leq \frac{1}{2} |Df|(I).
\end{equation*}
The constant $\frac 12$ is sharp, and equality holds precisely for $BV$ functions with a single jump discontinuity at the midpoint of $I$ (see~\cite{Hadwiger} and~\cite[Lemma~7.1]{ARBDN}). In this sense, one may think of the \emph{integro-differential equation}
\begin{equation*}\label{eq:12}
    \osc(f,I) = \frac 12 |Df|(I)
\end{equation*}
as one characterizing centered jump-type discontinuities on $I$.

In contrast, for affine functions, the Poincaré-Wirtinger quotient attains the smaller constant $\frac 14$ at every scale. Namely,

\begin{equation}\label{eq:affine}
\osc(f,I) = \frac{1}{4} |Df|(I) \quad \text{for all intervals } I \subset\subset (a,b).
\end{equation}

Motivated by the understanding of the fine properties of $BV$ functions through Poincar\'e quotients (see \cites{ambrosio2016bmo,arroyo2021representation,ARBDN}), Bonicatto, Del Nin, and the first author proposed in~\cite[Question~5]{ARBDN} that monotone functions satisfying~\cref{eq:affine} should be affine. The goal of this note is to give a positive answer to a more general version this question. Indeed, without assuming the monotonicity, we prove the following:  
\begin{theorem}\label{thm:1}
 Let $f : (a,b) \to \R$ be a locally integrable function satisfying~\cref{eq:affine}. Then, $f$ is affine. 
\end{theorem}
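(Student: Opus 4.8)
\emph{Step 1: $f\in BV_{\loc}$ and the oscillation is a measure.} Since $\osc(f,I)\le 2\fint_I|f|<\infty$ for every $I\subset\subset(a,b)$, the identity~\cref{eq:affine} forces $|Df|(I)=4\,\osc(f,I)<\infty$, so $f\in BV_{\loc}((a,b))$ and $f$ has a good representative $\bar f$ with one-sided limits at every point. Set $\mu\coloneqq\tfrac14|Df|$, a locally finite Borel measure; the hypothesis reads $\osc(f,I)=\mu(I)$ for all intervals~$I$.

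\emph{Step 2: $\bar f$ is continuous, hence $\osc(f,\cdot)$ is additive on adjacent intervals.} Suppose $\mu(\{x_0\})=\alpha>0$. For $I_r=(x_0-r,x_0+r)$ one has $\osc(f,I_r)=\mu(I_r)\to\alpha$, while $\osc(f,(x_0-r,x_0))=\mu((x_0-r,x_0))\to0$ and similarly on the right. Splitting $I_r$ at $x_0$ and estimating $\fint_{(x_0-r,x_0)}|f-(f)_{I_r}|$ from below by $|(f)_{(x_0-r,x_0)}-(f)_{I_r}|$ and from above by $\osc(f,(x_0-r,x_0))+|(f)_{(x_0-r,x_0)}-(f)_{I_r}|$ (and likewise on the right), and using $(f)_{I_r}=\tfrac12\bigl((f)_{(x_0-r,x_0)}+(f)_{(x_0,x_0+r)}\bigr)$, one gets $\bigl|(f)_{(x_0-r,x_0)}-(f)_{(x_0,x_0+r)}\bigr|\to 2\alpha$. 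The two half-averages converge to $\bar f(x_0^-)$ and $\bar f(x_0^+)$ (the oscillation on each half tends to $0$), so $|\bar f(x_0^+)-\bar f(x_0^-)|=2\alpha$; but $|\bar f(x_0^+)-\bar f(x_0^-)|=|Df|(\{x_0\})=4\alpha$, a contradiction. Hence $\mu$ has no atoms, $\bar f\in C((a,b))\cap BV_{\loc}$, and $\osc(f,(c,d))=\osc(f,(c,m))+\osc(f,(m,d))$ whenever $c<m<d$.

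\emph{Step 3: a reverse-triangle inequality at every scale.} Fix an interval $I$, split it at its midpoint as $I=I^-\cup I^+$, and write $m=(f)_I$, $m^\pm=(f)_{I^\pm}$, so $m=\tfrac12(m^-+m^+)$. Additivity gives $\int_I|f-m|=2\int_{I^-}|f-m^-|+2\int_{I^+}|f-m^+|$, while the triangle inequality gives $\int_{I^\pm}|f-m|\le\int_{I^\pm}|f-m^\pm|+\tfrac{|I|}{4}|m^--m^+|$. Combining,
\begin{equation*}
\osc(f,I)=\osc(f,I^-)+\osc(f,I^+)\le\bigl|(f)_{I^-}-(f)_{I^+}\bigr|,
\end{equation*}
and analogously for uneven splittings; moreover equality in the triangle step would force $f$ to be constant on each half. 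Running the same manipulation on the $C^1$ primitive $F(x)=\int_{x_0}^x f$ and using $\|F-\ell_I\|_{L^\infty(I)}\le\tfrac12\,\mathrm{Var}(F-\ell_I;I)$ (with $\ell_I$ the affine interpolant of $F$ at the endpoints of $I$) gives the dual bound $\|F-\ell_I\|_{L^\infty(I)}\le\tfrac{|I|}{8}|Df|(I)$ — an inequality saturated at every scale precisely by quadratic $F$, i.e.\ affine $f$.

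\emph{Step 4 (crux): conclude that $f$ is affine.} It remains to upgrade the family of equalities~\cref{eq:affine}, together with the rigidity from Step 3, to the statement that $Df$ has no singular part and constant absolutely continuous density. The point is that~\cref{eq:affine} holding \emph{at every scale} prevents $Df$ from being either too cancellative or too concentrated near endpoints on any subinterval; quantitatively one feeds the exact identity $(f)_{I^+}-(f)_{I^-}=\tfrac2{|I|}\int_I w_I\,dDf$, with $w_I$ the tent function of $I$, into the inequalities of Step 3 and iterates over the dyadic subintervals, forcing $Df=\alpha\,\mathcal L^1$ with $\alpha$ constant. (Heuristically: for $f\in C^3$ with $f'\neq0$, expanding~\cref{eq:affine} on $(x-r,x+r)$ to order $r^3$ yields the ODE $f'''=-(f'')^2/(3f')$, whose solutions are $f(x)=A(x-x_0)^{7/4}+B$; the next-order terms eliminate these, leaving only $f''\equiv0$.)

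I expect Step 4 to be the main obstacle. On a \emph{single} interval~\cref{eq:affine} is satisfied by a large family of non-affine functions, and a Taylor expansion of~\cref{eq:affine} anchored at one point is consistent to high order; so the argument cannot be infinitesimal at a point, but must genuinely couple all scales at once.
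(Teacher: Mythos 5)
Steps 1--3 are correct (Step~1 matches the paper's Lemma~\ref{prop:Lip_regularity}, Step~1; Step~2 is a valid atomlessness argument; Step~3 yields a true inequality $\osc(f,I)\le|(f)_{I^-}-(f)_{I^+}|$). But you have correctly identified that Step~4 is the crux, and as written it is not a proof --- it is a description of what a proof should accomplish, together with a heuristic that contains errors.

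The concrete gaps are the following. First, you never establish enough regularity to justify the Taylor expansion you gesture at. After Step~2 you only know $f\in BV_{\loc}\cap C^0$. The paper's proof spends the bulk of its effort bootstrapping this: $\osc(f,(c,\cdot))$ is shown to be locally Lipschitz via the chain rule for Lipschitz maps, which via $|Df|(I_r)/2r\le 4C$ and Lebesgue differentiation gives $f\in W^{1,\infty}_{\loc}$; then an ODE-type identity (the paper's~\cref{eqodexyy}) obtained by differentiating $\frac14|Df|((x,y))=\osc(f,(x,y))$ gives monotonicity, then $f'\in BV_{\loc}$, then $C^1$, then smoothness wherever $f'\neq 0$. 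None of this is visible in your proposal, and without it the phrase ``expanding~\cref{eq:affine} on $(x-r,x+r)$ to order $r^3$'' is not meaningful. Second, the heuristic itself is off: the ODE extracted by matching the $\eps^3$ coefficient is $3f'f'''=(f'')^2$ (equivalently $9A_1A_3=2A_2^2$ in the Taylor coefficients), not $f'''=-(f'')^2/(3f')$; accordingly its non-affine solutions are $f(x)=A(x-B)^{5/2}+C$, not $A(x-x_0)^{7/4}+B$. Third, once the candidate $x^{5/2}$ is isolated, it is not the case that ``the next-order terms eliminate these''; one must verify by a separate explicit computation that $x^s$ satisfies~\cref{eq:affine} only for $s=1$ (the paper checks that $8s=(1+s)^{2+1/s}$ fails at $s=5/2$). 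Finally, your alternative dyadic/iterated-scales strategy is mentioned but never executed; it is unclear how the inequality of Step~3 alone, even iterated, could force $Df$ to be a constant multiple of Lebesgue measure, since that inequality holds for many non-affine functions (e.g.\ $f(x)=x^3$ on $(-1,1)$).

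In short: the overall approach (show regularity, Taylor-expand the constraint, derive an ODE, eliminate the spurious branch) is the paper's, but you have supplied only the scaffolding and not the load-bearing parts --- the regularity bootstrap, the correct ODE, and the explicit exclusion of the $x^{5/2}$ solution.
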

In fact, we prove a more general statement of the same result: 

 \begin{theorem}\label{thm:2}  Let $f : (a,b) \to \R$ be a locally integrable function and suppose that the function 
 \[
    \{\text{$I \subset \subset (a,b)$ open interval}\} \to \R : I \mapsto \osc(f,I)\,,
 \]
 extends to a positive Borel measure $\mu$ on $(a,b)$.
 
 Then, $\mu = \frac 14 |Df|$ and $f$ is affine.
 \end{theorem}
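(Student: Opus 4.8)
The plan is to exploit the hypothesis ``$\osc(f,\cdot)$ extends to a Borel measure'' in the form that is really doing the work: \emph{superadditivity under splitting}. If $\mu(I)=\osc(f,I)$ for all open $I\subset\subset(a,b)$, then for every partition $\alpha=t_0<\dots<t_n=\beta$ of $(\alpha,\beta)\subset\subset(a,b)$,
\[
 \osc(f,(\alpha,\beta))=\mu((\alpha,\beta))\ \ge\ \sum_{i=1}^{n}\mu((t_{i-1},t_i))=\sum_{i=1}^{n}\osc(f,(t_{i-1},t_i)),
\]
with equality exactly when $\mu$ charges none of the $t_i$. Combining this with the elementary one-sided bound $\osc(f,J)\ge\frac12\bigl|(f)_{J_{\mathrm L}}-(f)_{J_{\mathrm R}}\bigr|$ for the two halves $J_{\mathrm L},J_{\mathrm R}$ of $J$ already forces $f\in BV_{\loc}$: applying superadditivity to the uniform partition of $(\alpha,\beta)$ into $N$ pieces, and to the shifted uniform partition whose nodes are the midpoints of the first, and adding the two inequalities, one obtains $|Df_{2N}|((\alpha,\beta))\le 4\,\osc(f,(\alpha,\beta))$ for the piecewise-constant average $f_{2N}$ of $f$; letting $N\to\infty$ and using lower semicontinuity of the total variation under $L^1$-convergence gives $|Df|\le 4\mu$. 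With the Poincaré--Wirtinger inequality on each interval this yields the two-sided comparison $\tfrac14|Df|\le\mu\le\tfrac12|Df|$ as measures.

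Next I would rule out jumps. Since $f\in BV_{\loc}$ it has one-sided limits everywhere, so a jump of amplitude $J>0$ at a point $c$ would give $\osc(f,(c-\lambda\rho,c+\rho))\to\frac{2\lambda}{(1+\lambda)^2}J$ as $\rho\to0$ for each fixed $\lambda>0$, whereas $\mu((c-\lambda\rho,c+\rho))\to\mu(\{c\})$ independently of $\lambda$; since $\lambda\mapsto\frac{2\lambda}{(1+\lambda)^2}$ is not constant, $J=0$. Hence $D^jf=0$, $\mu$ is atomless, $f$ has a continuous representative, and the splitting identity becomes \emph{exact}: $\osc(f,(\alpha,\beta))=\osc(f,(\alpha,\gamma))+\osc(f,(\gamma,\beta))$ for all $\alpha<\gamma<\beta$. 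A blow-up at $\mathcal L^1$-a.e.\ point — using that $f'$ has a Lebesgue point and $|D^cf|$ has null $\mathcal L^1$-density there, so that $f$ is approximately affine at scale $r$ — gives $\osc(f,(x-r,x+r))=\tfrac r2|f'(x)|+o(r)$, i.e.\ $\mu^{\mathrm{ac}}=\tfrac14|Df^{\mathrm{ac}}|=\tfrac14|f'|\mathcal L^1$. Subtracting, the measure $\nu:=\mu-\tfrac14|Df|$ is nonnegative, purely singular, and satisfies $0\le\nu\le\tfrac14|D^cf|$. It remains to show $\nu=0$, $D^cf=0$ and $f'$ constant, i.e.\ that $f$ is affine (after which $\mu=\osc(f,\cdot)=\tfrac14|Df|$ is automatic).

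For the last step I would use the exact additivity together with the sharp upper bound dual to the one-sided Poincaré estimate. Writing $F(x)=\int^x f$, exact additivity at the midpoint $m=\frac{s+t}{2}$ and the triangle inequality give
\[
 \osc(f,(s,t))\ \le\ \frac{2}{t-s}\bigl|2F(m)-F(s)-F(t)\bigr|\ =\ \frac{2}{t-s}\Bigl|\int_{(s,t)}\mathrm{dist}(\cdot,\{s,t\})\,dDf\Bigr|,
\]
so the oscillation of $f$ over $(s,t)$ is controlled by the failure of $F$ to be affine on $(s,t)$, against a weight that vanishes at the endpoints; the matching lower bound replaces $2/(t-s)$ by $1/(t-s)$ and is an equality exactly for affine $f$. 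Feeding in $\osc(f,(s,t))=\mu((s,t))\ge\tfrac14|Df|((s,t))$ shows that $|Df|$ can concentrate only a bounded fraction of its mass near either endpoint of \emph{any} interval; rescaling and iterating this non-concentration statement forces $|Df|$ to have no singular part, and then, both bounds above being equalities, the residual identity $\int_{(s,t)}|f'|\bigl(\mathrm{dist}(\cdot,\{s,t\})/(t-s)-\tfrac14\bigr)=0$ for all $s<t$ pins $|f'|$ to be constant; continuity of $f$ then makes $f$ affine.

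I expect this final step to be the main obstacle. The sharp lower and upper bounds for $\osc(f,(s,t))$ in terms of $\bigl|\int_{(s,t)}\mathrm{dist}(\cdot,\{s,t\})\,dDf\bigr|$ differ by exactly a factor of two, so no purely local (blow-up) argument can close the gap: under rescaling the constraint degenerates, because affine functions are the tangent at $\mathcal L^1$-a.e.\ point and already saturate the lower bound. The additivity must therefore be used at macroscopic scales, and the delicate point is to upgrade the qualitative statement ``$|Df|$ does not concentrate near endpoints'' — first to ``$|Df|$ has a locally bounded density'', excluding the Cantor part, and finally to ``the density is constant''.
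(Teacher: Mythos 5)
Your argument is correct and genuinely different from the paper's up to the point where you establish: $f\in BV_{\loc}$ with $\tfrac14|Df|\le\mu\le\tfrac12|Df|$; $D^jf=0$ (by the clever $\lambda$-asymmetric blow-up, which I checked); exact additivity of $\osc(f,\cdot)$ on intervals; $\mu^{\mathrm{ac}}=\tfrac14|f'|\,\mathcal L^1$ by blow-up at Lebesgue points of $f'$; and the nice two-sided bound
$\tfrac1{t-s}\bigl|2F(m)-F(s)-F(t)\bigr|\le\osc(f,(s,t))\le\tfrac2{t-s}\bigl|2F(m)-F(s)-F(t)\bigr|$,
where the upper bound indeed follows from exact additivity and the triangle inequality (I verified this computation). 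In particular, your derivation of $f\in BV_{\loc}$ via superadditivity over partitions plus lower semicontinuity for the piecewise-constant averages is a nice alternative to the paper's route, which mollifies and uses convexity of $g\mapsto\sum_I\osc(g,I)$ together with Jensen's inequality. One minor inaccuracy: the lower bound $\osc(f,(s,t))\ge\tfrac12|\bar f_L-\bar f_R|$ is \emph{not} an equality only for affine $f$ — a step function at the midpoint also saturates it — but this does not affect what follows.

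The genuine gap is exactly where you say it is, and it is not a small one. Your non-concentration estimate shows that $|Df|$ cannot put more than a definite fraction of its mass into a fixed fraction of the length of any interval near an endpoint, but there is no clear route from this to $D^cf=0$: Cantor-type measures can be quite ``spread out'' and still purely singular, and the factor-$2$ gap between your lower and upper bounds means the constraint degenerates under rescaling, as you observe. More importantly, even if you could kill the Cantor part, the final step (``the density is constant'') is left as a heuristic: the residual identity you write down does not follow from the bounds you have, since the lower bound is also saturated by non-affine functions at all scales. So as written the argument does not reach either conclusion of the theorem.

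For contrast, the paper avoids this entirely by proving that $f$ is locally \emph{Lipschitz} — much stronger than $D^cf=0$ — via a different mechanism: one fixes $c$ and studies the function $x\mapsto\osc(f,(c,x))=\tfrac1{x-c}h(x,r(x))$, where $r(x)=(f)_{(c,x)}$ and $h(x,m)=\int_c^x|f-m|$; one shows $h$ and $r$ are Lipschitz, hence so is $\omega(x)=\osc(f,(c,x))$, and then the identity $|Df|((x-\eps,x+\eps))=4(\omega(x+\eps)-\omega(x-\eps))$ gives a uniform bound on the upper density of $|Df|$ and hence $f\in W^{1,\infty}_{\loc}$ by Lebesgue differentiation. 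After that the paper derives an explicit one-sided ODE for $|f'|$ by differentiating $\osc(f,(x,y))$ in $y$, deduces monotonicity, $f'\in BV_{\loc}$, $C^1$ regularity, then local smoothness where $f'\ne0$, and finally closes the argument via a fourth-order Taylor expansion leading to the ODE $3f'f'''=(f'')^2$ and an exclusion of the $x^{5/2}$ branch. If you want to pursue your route, the first thing to try is whether your exact-additivity identity, differentiated in the right endpoint, reproduces the paper's Lipschitz estimate for $x\mapsto\osc(f,(c,x))$; that would let you splice your Steps 1--5 onto the paper's proof from that point on.
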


\section{Bulding blocks of the proof}
For $a,b \in \R$ with $a < b$ we write $\mathcal K_{a,b} \subset 2^{(a,b)}$ to denote the family of all open, nonempty and compactly contained sub-intervals of $(a,b)$. We write $|I|$ to denote the length of an interval $I\subset \R$.

\subsection{Invariance under affine transformations}

Let $f: (a, b) \to \mathbb{R}$ be locally integrable. Consider affine non constant transformations $L, F: \mathbb{R} \to \mathbb{R}$, {and suppose that} $F$ maps the interval $(a', b')$ onto $(a, b)$. By applying the area formula and the chain rule, we find that the function
$$\tilde{f}(x) = L f F(x), \quad x \in (a', b'),$$
satisfies
$$\osc(\tilde{f}, I) =
|L'| \cdot \osc(f, F(I))$$
and
$$|D\tilde{f}|(I) = |L'| \cdot |Df|(F(I))$$
for all $I \in \mathcal K_{a',b'}$.

Consequently, $f$ satisfies equation \cref{eq:affine} on $(a, b)$ if and only if $\tilde{f}$ satisfies \cref{eq:affine} on $(a', b')$. This property will be used repeatedly throughout the text.

\subsection{Lipschitz regularity of solutions}
The following lemma shows that the statements contained in~\cref{thm:1} and~\cref{thm:2} are equivalent, and that any function satisfying the assumptions of either must be locally Lipschitz.

 \begin{lemma}\label{prop:Lip_regularity} Let $f : (a,b) \to \R$ be locally integrable.  The following are equivalent:
 \begin{enumerate}
     \item\label{it:a} $f$ satisfies~\cref{eq:affine},
     \item\label{it:b} the function 
 \[
 \mathcal K_{a,b} \to \R : I \mapsto \osc(f,I)
 \]
 extends to a positive Borel measure $\mu \in \mathcal M_+((a,b))$.
 \end{enumerate}
Moreover, if $f$ satisfies the second property, then $f$ is locally Lipschitz. 
 \end{lemma}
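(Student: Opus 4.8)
The two directions are of very different depth. For $\eqref{eq:affine}\Rightarrow$(second property): since $f\in L^1_{\loc}$ we have $\osc(f,I)<\infty$ for every $I\subset\subset(a,b)$, so \cref{eq:affine} forces $|Df|(I)=4\osc(f,I)<\infty$; hence $f\in BV_{\loc}((a,b))$, $|Df|$ is a positive Radon measure, and $\mu:=\tfrac14|Df|$ is a positive Borel measure whose value on an interval $I$ is exactly $\osc(f,I)$. For the converse it suffices to prove that, under the second property, \emph{$f$ is locally Lipschitz and $\mu=\tfrac14|Df|$}: indeed then $\osc(f,I)=\mu(I)=\tfrac14|Df|(I)$ for all intervals. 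So the entire lemma reduces to the implication ``second property $\Rightarrow$ $f$ locally Lipschitz and $\mu=\tfrac14|Df|$'', and the Lipschitz regularity is the substantive point.

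Assume the second property. First I would show $f\in BV_{\loc}$ with $|Df|\le 4\mu$. The elementary input is that if $I^-,I^+$ are the two halves of an interval $I$, then $(f)_I=\tfrac12\big((f)_{I^-}+(f)_{I^+}\big)$, so by the triangle inequality
\[
\big|(f)_{I^-}-(f)_{I^+}\big|\le\frac1{|I|}\Big(\big|\textstyle\int_{I^-}(f-(f)_I)\big|+\big|\int_{I^+}(f-(f)_I)\big|\Big)\le 2\,\osc(f,I).
\]
Fix $I_0\subset\subset(a,b)$, let $f_k$ be the mean value of $f$ on each of the $2^k$ dyadic subintervals $I_0^{(k,j)}$ of $I_0$, and sum the estimate over consecutive cells; since each point of $I_0$ lies in at most two of the length-$2^{1-k}|I_0|$ windows centred at a dyadic node, one gets $|Df_k|(I_0)=\sum_j\big|(f)_{I_0^{(k,j)}}-(f)_{I_0^{(k,j+1)}}\big|\le 4\mu(I_0)$. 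As $f_k\to f$ in $L^1(I_0)$, lower semicontinuity of the total variation gives $|Df|(I_0)\le 4\mu(I_0)$; hence $f\in BV_{\loc}((a,b))$ and $|Df|\le 4\mu$ as measures, which together with Poincaré–Wirtinger yields $2\mu\le|Df|\le 4\mu$, so $\mu$ and $|Df|$ are mutually absolutely continuous. Next I would rule out atoms of $\mu$ (equivalently, show $f$ is continuous, so that $\osc(f,\cdot)$ is genuinely additive over intervals): if $\mu(\{x_0\})>0$ then, $f$ being $BV$, the point $x_0$ is a jump point with jump $\sigma=f(x_0^+)-f(x_0^-)\ne0$ and $\osc(f,(x_0-r,x_0+r))\to|\sigma|/2$; but $(x_0-r,x_0+r)$ is the left half of $(x_0-r,x_0+3r)$, so additivity and positivity of $\mu$ give $\osc(f,(x_0-r,x_0+3r))\ge\osc(f,(x_0-r,x_0+r))$, while the left side converges to the oscillation of the step function taking values $f(x_0^\mp)$ on $(-1,0),(0,3)$, which is $\tfrac38|\sigma|<\tfrac12|\sigma|$ — a contradiction.

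The remaining step, and the main obstacle, is the local Lipschitz bound, i.e. that $\mu$ (equivalently $|Df|$) has locally bounded density with respect to $\mathcal L^1$. I would argue by contradiction and rescaling: if the density of $\mu$ is unbounded near some $x_0$, choose $r_n\to0$ with $\mu((x_0-r_n,x_0+r_n))/r_n\to\infty$ that are moreover quasi-maximal for the density at all smaller scales, and set $g_n(y)=\big(f(x_0+r_ny)-(f)_{(x_0-r_n,x_0+r_n)}\big)/\osc(f,(x_0-r_n,x_0+r_n))$. By the affine invariance recorded in Section~2.1 each $g_n$ again has oscillation extending to a positive measure, with $\osc(g_n,(-1,1))=1$ and, by the previous step, $|Dg_n|((-1,1))\in[2,4]$; hence $\{g_n\}$ is bounded in $BV((-1,1))$, a subsequential $L^1$-limit $g$ still satisfies the second property on $(-1,1)$, is continuous, and has $|Dg|((-\eta,\eta))\le C\eta$ for all $\eta$ by the quasi-maximal choice. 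The hard part is then to turn the exact additivity of $\osc(g,\cdot)$ over \emph{all}, in particular misaligned, subintervals — together with $2\,\osc(g,\cdot)\le|Dg|\le4\,\osc(g,\cdot)$ — into a multiscale rigidity that is incompatible with the normalisation $\osc(g,(-1,1))=1$ surviving the unbounded-density scaling we started from. This consistency-at-all-scales estimate is where the real work lies, and it is the same rigidity mechanism that the proofs of \cref{thm:1} and \cref{thm:2} must exploit.

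Finally, once $f$ is known to be locally Lipschitz, $Df=f'\mathcal L^1$ with $f'\in L^\infty_{\loc}$, so $\mu\le\tfrac12|Df|\ll\mathcal L^1$ and $\mu$ has bounded density; at each point where $f$ is differentiable one has $\osc(f,(x-r,x+r))=\tfrac{|f'(x)|}{2}\,r+o(r)$, which identifies the density of $\mu$ as $\tfrac14|f'|$ a.e., whence $\mu=\tfrac14|Df|$ and $\osc(f,I)=\tfrac14|Df|(I)$ for every interval. This gives the implication ``second property $\Rightarrow$ \cref{eq:affine}'' and completes the equivalence, while the Lipschitz statement is exactly the ``moreover'' part. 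I expect the Lipschitz step to be by far the delicate one; the rest is soft.
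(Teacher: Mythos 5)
Your setup is correct and largely parallels the paper: $\ref{it:a}\Rightarrow\ref{it:b}$ is immediate, and you correctly reduce $\ref{it:b}\Rightarrow\ref{it:a}$ to ``$\ref{it:b}\Rightarrow$ $f$ locally Lipschitz and $\mu=\tfrac14|Df|$.'' Your dyadic piecewise-constant argument for $|Df|\le 4\mu$ (lower semicontinuity of total variation applied to $f_k\to f$) is a legitimate alternative to the paper's mollification-plus-Jensen route, and gives the same constant. Your identification $\mu=\tfrac14|Df|$ once $f$ is Lipschitz matches the paper's Step 4. The atom-exclusion step, while a nice observation, is not needed once the Lipschitz regularity is known, and it does not help you obtain it.

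The genuine gap is exactly where you flagged it: the local Lipschitz bound. You propose a blow-up/compactness argument and then acknowledge that you cannot close it, speculating that it ``is the same rigidity mechanism that the proofs of \cref{thm:1} and \cref{thm:2} must exploit.'' This is the wrong intuition: the paper's Lipschitz proof is self-contained, elementary, and does not use any rigidity from the main theorems. The missing idea is to study the \emph{one-sided oscillation function} as a function of its endpoint. Fix $a<c<c'<d<b$ and set $r(x)=(f)_{(c,x)}$, $h(x,m)=\int_c^x|f(s)-m|\,ds$, and $\omega(x)=\osc(f,(c,x))=\tfrac{1}{x-c}h(x,r(x))$. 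From Step~1 you already know $f\in BV_{\loc}$, hence $f\in L^\infty_{\loc}$ by the $BV\hookrightarrow L^\infty$ embedding in one dimension; from this boundedness one checks directly that $r$ and $h$ are Lipschitz on $[c',d]\times[-M,M]$ (the derivatives $\partial_x h=|f(x)-m|$ and $\partial_m h=R(I_x,m)$ are bounded by $2M$ and $|I_d|$, respectively, and $r'=(f-r)/(x-c)$ is bounded away from $c$). Hence $\omega$ is Lipschitz on $[c',d]$ with constant $C_{c,c',d}$. Now the additivity of $\mu$ is what converts this into a density bound: since $\mu$ is a measure extending $I\mapsto\osc(f,I)$, one has $\mu((x-\eps,x+\eps))=\omega(x+\eps)-\omega(x-\eps)$ for $x\in(c',d)$ and small $\eps$, and combining with $|Df|\le 4\mu$ from Step~1,
\[
\frac{|Df|((x-\eps,x+\eps))}{2\eps}\le 4\,\frac{\omega(x+\eps)-\omega(x-\eps)}{2\eps}\le 4\,C_{c,c',d}.
\]
The Lebesgue--Besicovitch differentiation theorem then gives $Df=g\,\mathcal L^1$ with $g\in L^\infty_{\loc}$, i.e.\ $f\in W^{1,\infty}_{\loc}$. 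No blow-up, no multiscale argument, and no appeal to the rigidity of \cref{thm:1} is required; the Lipschitz step is a direct consequence of the structure of the oscillation functional together with the measure property.
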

\begin{remark}
    The only measure $\mu$ for which the second property may hold is $\mu = \frac 14 |Df|$. 
\end{remark}
\begin{proof} That $\ref{it:a} \Longrightarrow \ref{it:b}$ is a mere observation: Indeed, if $f$ satisfies~\cref{eq:affine}, then the fact that $f$ is locally integrable implies that $\frac 14 |Df|$ is a Radon measure extending the oscillation map over intervals.\\

To show that $\ref{it:b} \Longrightarrow \ref{it:a}$ and the conditional Lipschitz regularity, we divide the proof into steps:\\

\emph{Step 1. First, we show that $f \in BV_{\loc}((a,b))$ with $\frac14 |Df|\le \mu$.}
It suffices to show that
\begin{equation}\label{eqdfoscmu}
   \frac 14 |Df|(J) \le {\mu(J)} <  \infty\, \quad \text{for all $J \in \Kcal_{a,b}$.}
\end{equation}
Indeed, since $\mu$ is locally finite ($f$ is locally integrable) we deduce that $f \in BV_\mathrm{loc}((a,b))$ and since $|Df|$ is also a measure it follows that $\frac 14 |Df| \le \mu$ as measures.\\

The proof of \cref{eqdfoscmu} can be inferred from 
\cite[Th.~2.1]{FuscoMoscarielloSbordone2016} using convexity as in \cite[Prop. 5.1]{PonceSpector2017}. 
 For the reader's benefit, we give next a rigorous sketch of the proof.
 Assume first that $f\in C^1((a,b))$.
For any $x\in (a,b)$, one computes
\begin{equation*}
 \frac14 |f'|(x) = \lim_{\eps\to0}
 \frac{1}{\eps} \osc(f,(x-\eps/2,x+\eps/2)).
\end{equation*}
Since $J\subset\subset (a,b)$, the limit  is uniform for $x\in J$. Let  $\mathcal F_{J,\eps}$  denote a decomposition of $J$ into  pairwise disjoint intervals,
of length between $\eps$ and $2\eps$ (this exists, if $\eps$ is smaller than $|J|$).
 One checks
\begin{equation*}
\frac14 \int_J |f'|(x) dx \le \lim_{\eps\to0}
 \sum_{I\in \mathcal F_{J,\eps}} 
\osc(f,I)\le \mu(J).
\end{equation*}
For $\delta > 0$ small and $f\in L^1_{\loc}((a,b))$ we get $f_\delta\in C^\infty((J)_{\delta})$, {where $f_\delta$ is a standard $\delta$-scale mollification of $f$ and $(J)_\delta = \{x \in \R : \mathrm{dist}(x,J) < \delta\}$}.
 {Since $\mathcal F_{J,\eps}$ has finite cardinality, it follows that $g\mapsto\sum_{I \in \mathcal F_{J,\eps}} \osc (g,I)$ is a convex map from $L^1(J)$ to $\R$.} Hence, by Jensen's inequality  \begin{equation*}
\sum_{I\in \mathcal F_{J,\eps}} 
\osc({f_\delta|_J},I)\le 
\sup_{|y|<\delta} 
\sum_{I\in \mathcal F_{J,\eps}} 
\osc(f(\cdot +y),I)\le \mu((J)_\delta).
 \end{equation*}
Here, in passing to the last inequality we used that $\{I - y\}_{I \in \mathcal F_{J,\eps}}$ is a family of pairwise disjoint intervals.
 Letting $\eps\to0$,  for any $\delta>0$ sufficiently small, we obtain
\begin{equation*}
 \frac14 \int_J |f_\delta'|(x) \, dx
 \le \mu((J)_\delta).
\end{equation*}
{Subsequently letting  $\delta\to0$, we get $f_\delta \to f$ in $L^1(J)$ and by the $L^1$-lower semicontinuity of the total variation we discover that} 
\begin{equation*}
 \frac14 |Df|(J)\le\liminf_{\delta\to0}\frac14 \int_J |f_\delta'|(x) \, dx
 \le \limsup_{\delta\to0}\mu((J)_\delta)=\mu(\overline J) < \infty.
\end{equation*}
This proves~\cref{eqdfoscmu} and that $f \in BV(J)$ for any $J \in \Kcal_{a,b}$ with $\mu(\partial J)=0$. The case for general $J\in \Kcal_{a,b}$ follows from this and the fact that both $|Df|$ and $\mu$ are measures.   \\

\emph{Step 2. We observe that $\mu \le \frac 12 |Df|$ as measures.} Indeed, the Poincar\'e--Wirtinger inequality yields
\[
	\mu(I) = \osc(f,I) \le \frac 12 |Df|(I) \quad \text{for all $I \in \Kcal_{a,b}$\,.}
\]
The Borel regularity of $\mu$ yields that $\mu \le \frac 12 |Df|$ as measures.\\

\emph{Step 3. We prove that $f$ is locally Lipschitz.}
Since $f$ is a function with locally bounded variation, it follows from the embedding $BV(I) \to L^\infty(I)$ for $I \subset \subset \R$ that $f \in L^\infty_{\loc}((a,b))$. In particular,
\[
	M_I \coloneqq \|f\|_{L^\infty(I)} < \infty \quad \text{for all $I\in \Kcal_{a,b}$\,.}
\]

Now, fix $a < c <  d < b$ and let us write $I_x\coloneqq (c,x)$ for $x \in (c,d]$.
By the fundamental theorem of calculus, the function $r : (c,d] \to \R$ given by
\[
	r(x) \coloneqq \fint_{I_x} f(s)\,ds\,,
\] 
is absolutely continuous and differentiable almost everywhere with
\begin{equation}\label{eq:chain_rule_average}
\begin{split}
	\frac{d}{dx}r(x) & = \frac{d}{dx} \left( \frac{1}{x-c} \int_{c}^x f(s) \,ds\right) \\
	& = - \frac{1}{(x-c)^2} \int_{c}^x f(s)\,ds  + \frac{1}{x-c} f(x) = \frac{f(x) - r(x)}{|I_x|}\,.
\end{split}
\end{equation}
Moreover, $\sup_{x \in (c,d)} |r(x)| \le M_{(c,d)}$.
From \cref{eq:chain_rule_average} we deduce that for every $c'\in (c,d)$ the function $r$ is Lipschitz on $[c',d]$, with a Lipschitz constant that depends on $c$, $c'$, $d$.

Now, consider the function $h : (c,d) \times [-M_{I_d},M_{I_d}] \to \R$ given by
\[
	h(x,m) \coloneqq \int_c^x |f(s) - m| \, ds
\]
and notice that $h$ is Lipschitz, as the concatenation of Lipschitz maps. For almost every $(x,m) \in (c,d) \times [-M_{I_d},M_{I_d}]$ the fundamental theorem of calculus and the chain rule for Lipschitz maps give
\begin{equation}\label{eq:chain_rule_h}
\begin{split}
	\partial_x h(x,m) & = |f(x) - m| \le 2M_{I_d}\,,\\
	\partial_m h(x,m) & = \mathcal L^1(\{f < m\} \cap I_x) -  \mathcal L^1(\{f > m\} \cap I_x) \eqqcolon R(I_x,m)\,.
	\end{split}
\end{equation}
Since $R(I_{x},m) \le |I_{x}|$, it follows that $\mathrm{Lip}(h) \le 2 M_{I_{d}} + |I_d|$.

We define $\omega : (c,d] \to \R$
as the oscillation  
\[
	\omega(x) \coloneqq \osc(f,I_x)\,,
\]
and observe that 
\begin{equation}\label{eq:osc_lip}
	\osc(f,I_x) = \frac{1}{x -c} \cdot h\left(x, r(x)  \right)\,.
\end{equation}
Once again,
for any $c'\in (c,d)$
it follows from the chain rule that $\omega$
is Lipschitz on $[c',d]$, with Lipschitz constant $C_{c,c',d}$ depending solely on $c$, $c'$, and $d$. The estimate $|Df| \le 4\mu$ and the assumption yield \begin{align*}
	 \frac{|Df|((x-\eps,x+\eps))}{2\eps} & \le 
4\frac{\mu(x-\eps,x+\eps)}{2\eps}=
	 4  \frac{\osc(f,I_{x+\eps}) - \osc(f,I_{x-\eps})}{2\eps} \\
	 & = 4 \frac{\omega(x+\eps) - \omega(x - \eps)}{2\eps} \le 4 C_{c,c',d} < \infty\,,
\end{align*}
for all $x \in (c',d)$ and every $0 < \eps < \mathrm{dist}(x,\partial (c',d))$. This shows that
\[
\limsup_{r \to 0} \frac{|Df|((x-r,x+r))}{2r} \le 4 C_{c,c',d} < \infty \quad \text{for all $x \in (c',d)$.}
\]
Since the interval $(c,d) \subset \subset (a,b)$
and $c'\in (c,d)$ were chosen arbitrarily, the Lebesgue--Besicovitch  differentiation theorem yields the existence of $g \in L^1_{\loc}((a,b))$ with
\[
	\|g\|_{L^\infty(I)} \le 4C_I < \infty\quad \forall I \in \Kcal_{a,b}
\]
{and} such that
\[
	Df  = g \mathcal L^1 \quad \text{as measures on $(a,b)$.}
\] 
In particular, $f \in W^{1,\infty}_{\loc}((a,b))$ with $f' = g$. This proves that $f$ is locally Lipschitz on $(a,b)$.\\

\emph{Step 4. We show that $\mu = \frac 14 |Df|$.}
We know that $\mu\ll|Df|\ll\mathcal L^1$. If we can prove that
\begin{equation}\label{eqmudfx}
	\frac{\mu}{|Df|}(x) = \lim_{r \to 0} \frac{\mu ((x -r,x+r))}{|Df| ((x -r,x+r))} \le \frac 14
\end{equation}
for $\mathcal L^1$-almost every $x \in (a,b)$, the Besicovitch differentiation theorem and the Radon--Nykodym theorem yield the chain of inequalities
\[
	\frac 14 |Df| \le \mu = \frac{\mu}{|Df|}\, |Df| \le \frac{1}{4} |Df| \quad \text{as measures on $(a,b)$}\,.
\]
It follows that all inequalities must be equalities and hence $\mu = \frac 14 |Df|$.

It remains to prove \cref{eqmudfx}.  The proof can be directly obtained from
the results contained in \cite{ARBDN}, but we {give} a self-contained proof for {reader's benefit}:  Since $\mu \ll |Df| \ll \mathcal L^1$, we may assume that $x_0 \in \mathrm{spt}(|Df|)$ for otherwise there is nothing to show (this is used in the conclusion).
Let $x_0 \in (a,b)$ be a Lebesgue continuity point of $f'$. Shortening $I_r\coloneqq(x_0-r,x_0+r)$ and
letting $g(x) \coloneqq f(x) - f(x_0) - f'(x_0)(x - x_0)$, we deduce from Poincar\'e's inequality that
\[
     \frac 1{2r} \fint_{I_r} |g - (g)_{I_r}| \, dx \le \frac 12 \fint_{I_r} |f' - f'(x_0)| \, dx \to 0 \quad \text{as $r \to 0$}.
\]
Since $g - (g)_{I_r} = f - (f)_{I_r} - f'(x_0)(x-x_0)$, it follows  that
\[
   \limsup_{r \to 0} \frac 1{2r} \fint_{I_r} |f - (f)_{I_r} - f'(x_0)(x-x_0)|  \, dx = 0.
\]
Therefore,
\[\begin{split}\lim_{r\to0}
\frac{\mu(I_r)}{2r}=&
\lim_{r\to0}
\frac1{2r}
\fint_{I_r}
|f(x)-(f)_{I_r}| \, dx\\
=& \lim_{r \to 0} \frac 1{2r} \fint_{I_r} |f'(x_0)(x - x_0)| \, dx \\
=&
 |f'(x_0)| \lim_{r\to0}
 \frac{1}{4r^2}
\int_{I_r}
|x-x_0| dx
=\frac14 |f'(x_0)|.
\end{split}
\]
Since almost every $x_0 \in (a,b) \cap \mathrm{spt}(|Df|)$ is a Lebesgue continuity point of $f'$ and at such points it holds $\lim_{r\to0} 2r |f'(x_0)| /|Df|(I_r) =1$, this concludes the proof of~\cref{eqmudfx}.

This finishes the proof.
\end{proof}

\subsection{Necessary Conditions}
Based on the preceding observations, we assume that $f: (a, b) \to \mathbb{R}$
satisfies \cref{eq:affine} and is locally Lipschitz with a weak derivative $f' \in L^\infty_{\mathrm{loc}}((a, b))$.
We also recall from~\cref{eq:osc_lip} that, {for any given $(a',b') \in \Kcal_{a,b}$,}
\[
     {(a',b')} \to \R: x \mapsto \osc(f,({a'},x))
\]
is Lipschitz {(with Lipschitz constant depending on $a',b'$)}. 
For $a < x < y < b$, we write
\[
m_{x,y} \coloneqq (f)_{(x,y)} = \frac{1}{y-x}\int_x^y f(t) \, dt\,.
\]
We also define
\begin{equation*}
 R(I)\coloneqq
 |\{f < (f)_{I}\} \cap I| - |\{f > (f)_{I}\} \cap I|\,, \qquad I \in \Kcal_{a,b}\,.
\end{equation*}

\subsubsection{One-Sided Variations on Intervals}
\begin{lemma}\label{lemmaode}
 If $f : (a,b) \to \R$ is locally integrable and satisfies~\cref{eq:affine}, then:
 \begin{enumerate}
\item\label{lemmaodeode}
For any $x\in (a,b)$ and almost any $y\in (x,b)$ we have
\begin{equation}\label{eqodexyy}
\begin{split}
\frac14 |f'(y)| =&
 -\frac{\osc(f,(x,y))}{y-x}
 + \frac{|f(y)-m_{x,y}|}{y-x}
 +
 \frac{R((x,y))}{y-x}
 \cdot
 \frac{f(y)-m_{x,y}}{y-x},
 \end{split}
\end{equation}
similarly for any $y\in (a,b)$ and almost any $x\in (a,y)$ we have
\begin{equation}\label{eqodexyx}
\begin{split}
\frac14 |f'(x)| =&
 -\frac{\osc(f,(x,y))}{y-x}
 + \frac{|f(x)-m_{x,y}|}{y-x}
+
 \frac{R((x,y))}{y-x}
 \cdot
 \frac{f(x)-m_{x,y}}{y-x}
 \end{split}
\end{equation}
(equivalently, both hold for $\mathcal L^2$-almost any $(x,y)$ such that $a<x<y<b$).
\item\label{lemmaoderem:1}
If $y$ is a {(Lebesgue)}  continuity point of
$y\mapsto R((x,y))$
then $y$ is a {(Lebesgue)} continuity point of $|f'|$. 
\item\label{lemmaoderem:2}
For any $x\in (a,b)$ and any $y\in (x,b)$ we have 
\[
\osc(f,(x,y))\le 2 |f(y)-m_{x,y}|.
\]
 \end{enumerate}
\end{lemma}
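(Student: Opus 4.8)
\emph{The plan} is to extract all three claims from a single differentiation identity. By \cref{prop:Lip_regularity} we may assume $f$ is locally Lipschitz; then $Df=f'\,\mathcal L^1$ and \cref{eq:affine} reads $\osc(f,(x,y))=\tfrac14\int_x^y|f'(t)|\,dt$ for all $a<x<y<b$.

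\emph{Deriving \cref{eqodexyy} and \cref{eqodexyx}.} Fix $x\in(a,b)$ and set $h(y,m):=\int_x^y|f(t)-m|\,dt$, so that $\osc(f,(x,y))=\tfrac1{y-x}\,h(y,m_{x,y})$. As in Step~3 of the proof of \cref{prop:Lip_regularity}, $h$ is locally Lipschitz with $\partial_y h(y,m)=|f(y)-m|$ and $\partial_m h(y,m_{x,y})=R((x,y))$ for a.e.\ $y$, while $y\mapsto m_{x,y}$ and $y\mapsto\osc(f,(x,y))$ are locally Lipschitz on $(x,b)$, the former with $\tfrac{d}{dy}m_{x,y}=\tfrac{f(y)-m_{x,y}}{y-x}$ by \cref{eq:chain_rule_average} and the latter by \cref{eq:osc_lip}. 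First I would differentiate the identity $\osc(f,(x,y))=\tfrac1{y-x}h(y,m_{x,y})$ by the chain rule, using $h(y,m_{x,y})=(y-x)\osc(f,(x,y))$, to obtain for a.e.\ $y\in(x,b)$
\[
\frac{d}{dy}\osc(f,(x,y))=-\frac{\osc(f,(x,y))}{y-x}+\frac{|f(y)-m_{x,y}|}{y-x}+\frac{R((x,y))}{y-x}\cdot\frac{f(y)-m_{x,y}}{y-x}.
\]
Comparing this with $\tfrac{d}{dy}\osc(f,(x,y))=\tfrac14|f'(y)|$ (valid for a.e.\ $y$ by \cref{eq:affine}) yields \cref{eqodexyy}. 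Then \cref{eqodexyx} follows by applying \cref{eqodexyy} to the reflected function $\tilde f(t):=f(a+b-t)$ — which again satisfies \cref{eq:affine} and hence, by \cref{prop:Lip_regularity}, is locally Lipschitz — at $\tilde x:=a+b-y$, $\tilde y:=a+b-x$, since reflection preserves oscillation, averages and the quantity $R$; and the $\mathcal L^2$-a.e.\ version is then immediate from Fubini's theorem.

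\emph{Items \ref{lemmaoderem:2} and \ref{lemmaoderem:1}.} For \ref{lemmaoderem:2}, multiply \cref{eqodexyy} by $y-x>0$: since the left-hand side $\tfrac{y-x}4|f'(y)|$ is nonnegative and $|R((x,y))|\le|(x,y)|=y-x$, one gets $\osc(f,(x,y))\le\bigl(1+\tfrac{|R((x,y))|}{y-x}\bigr)|f(y)-m_{x,y}|\le 2|f(y)-m_{x,y}|$ for a.e.\ $y\in(x,b)$; as both sides are continuous in $y$ on $(x,b)$ ($\osc(f,(x,\cdot))$ is locally Lipschitz, $f$ and $m_{x,\cdot}$ are continuous) and $x\in(a,b)$ is arbitrary, \ref{lemmaoderem:2} follows. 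For \ref{lemmaoderem:1}, rewrite \cref{eqodexyy} as $\tfrac14|f'(y)|=A(y)+B(y)\,R((x,y))$ for a.e.\ $y$, with $A(y):=\tfrac{|f(y)-m_{x,y}|-\osc(f,(x,y))}{y-x}$ and $B(y):=\tfrac{f(y)-m_{x,y}}{(y-x)^2}$ continuous on $(x,b)$. If $y_0$ is a Lebesgue continuity point of $y\mapsto R((x,y))$, then — using continuity of $A$, local boundedness of $B$, and $\fint_{(y_0-\eps,y_0+\eps)}|R((x,y))-R((x,y_0))|\,dy\to0$ as $\eps\to0$ — one checks that $y_0$ is a Lebesgue continuity point of $A+B\,R((x,\cdot))$, and since this function coincides a.e.\ with $\tfrac14|f'|$, also of $|f'|$.

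\emph{The main obstacle} is the chain rule in the derivation of \cref{eqodexyy}: since $(y,m)\mapsto h(y,m)$ is merely Lipschitz and not $C^1$ in $m$, one must verify that the a.e.\ derivative of the composition $y\mapsto h(y,m_{x,y})$ is given by the stated formula. I would handle this exactly as in Step~3 of \cref{prop:Lip_regularity}: $h$ is $C^1$ in $y$, $\partial_m h(y,\cdot)$ is monotone and continuous off an at most countable set, and on any interval where $m_{x,\cdot}$ is constant one has $\tfrac{d}{dy}m_{x,y}=0$ a.e., so the set of $y$ at which the curve $y\mapsto(y,m_{x,y})$ meets the non-differentiability locus of $h$ in a way that breaks the chain rule is Lebesgue-null; alternatively, one differentiates the difference quotient of $h(y,m_{x,y})$ directly. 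The remaining estimates are routine.
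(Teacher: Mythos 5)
Your overall strategy mirrors the paper's exactly: differentiate $\osc(f,(x,y))=\tfrac14\int_x^y|f'|$ in $y$ by writing $\osc(f,(x,y))=h(y,m_{x,y})/(y-x)$ and applying the chain rule, then reflect for the other endpoint, and read off items~\ref{lemmaoderem:2} and~\ref{lemmaoderem:1} from the identity. You also correctly identify the crux: $h$ is merely Lipschitz in $m$, so the chain rule for $y\mapsto h(y,m_{x,y})$ is not automatic on the set $E:=\{y:|\{f=m_{x,y}\}\cap(x,y)|>0\}$, which need not be Lebesgue-null.

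Your proposed resolution of that obstacle does not close it. The assertion ``on any interval where $m_{x,\cdot}$ is constant one has $\frac{d}{dy}m_{x,y}=0$ a.e.'' is both true and beside the point: $E$ is not a union of intervals on which $m_{x,\cdot}$ is constant, so this controls nothing about a.e.\ $y\in E$. What is actually needed is the stronger fact about level sets of absolutely continuous functions: for each fixed $\alpha$, $\partial_y m_{x,y}=0$ a.e.\ on $\{y:m_{x,y}=\alpha\}$ (no interval structure assumed), and since only countably many $\alpha$ have $|\{f=\alpha\}\cap(a,b)|>0$, a countable union gives $\partial_y m_{x,y}=0$ a.e.\ on $E$. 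Moreover, even with that in hand, knowing $\partial_y m_{x,y}=0$ does not by itself justify the chain-rule formula when $\partial_m h$ is undefined at $(y,m_{x,y})$. The paper fills this in: $\partial_y m_{x,y}=0$ forces $f(y)=m_{x,y}$ via \cref{eqdymxy}, so the two $(f(y)-m_{x,y})$-terms in \cref{eq:derivative_osc} vanish, and one then shows directly that $h(x,y',m_{x,y'})-h(x,y,m_{x,y})=\mathrm{o}(|y'-y|)$ by splitting the increment, using the Lipschitz bound for $h$ in $m$ together with $m_{x,y'}-m_{x,y}=\mathrm{o}(|y'-y|)$, and the continuity of $f$ with $f(y)=m_{x,y}$. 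Your parenthetical ``alternatively, one differentiates the difference quotient of $h(y,m_{x,y})$ directly'' gestures at this but is not carried out, and that is precisely the step that requires work. The rest of your argument (the reflection trick, the continuity upgrade in item~\ref{lemmaoderem:2}, the continuity/Lebesgue-point reasoning in item~\ref{lemmaoderem:1}) matches the paper and is sound.
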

\begin{proof}
The argument used to prove~\ref{lemmaodeode}
is similar to the one in Step 3 of the proof of Lemma~\ref{prop:Lip_regularity}.
We pick $x$, $y$ such that $a<x<y<b$, and
 differentiate the equation
 $\frac14 |Df|((x,y)) = \osc(f, (x,y))$ in $y$.
 For any fixed $x\in (a,b)$, differentiating in $y$ we obtain
 \begin{equation*}
\frac{d}{dy}\frac14 |Df|((x,y))=\frac{|f'|(y)}4
 \end{equation*}
 for almost any $y\in (x,b)$.
In turn, the computation in \cref{eq:chain_rule_average} {and the continuity of $f$} give
\begin{equation}\label{eqdymxy}
\frac{d}{dy}m_{x,y}=\frac{f(y)-m_{x,y}}{y-x}
\end{equation}
{for all $y \in (x,b)$.} 
We let
\begin{equation*}
 h(x,y,m)\coloneqq\int_x^y |f(s)-m|\, ds
\end{equation*}
and observe that it is locally Lipschitz,
with
$\partial_y h(x,y,m) =|f(y)-m|$.
Further, for any $m\in\R$ such that $|\{f=m\}\cap (x,y)|=0$, we also have
\begin{equation*}
\partial_m h(x,y,m)=
|\{f < m\} \cap (x,y)| - |\{f > m\} \cap (x,y)|.
\end{equation*}
By the chain rule for Lipschitz functions,
for any fixed $x$ and any $y\in (x,b)$,
{with} $|\{f=m_{x,y}\}\cap (x,y)|=0$, it holds
\begin{equation}\label{eq:derivative_osc}\begin{split}
 \frac{d}{dy} \osc(f,(x,y))=&
 \frac{d}{dy} \frac{h(x,y,m_{x,y})}{y-x}\\
 =&
 -\frac{\osc(f,(x,y))}{y-x}
 + \frac{|f(y)-m_{x,y}|}{y-x} \\
 & \qquad +
 \frac{R((x,y))}{y-x}
 \cdot
 \frac{f(y)-m_{x,y}}{y-x}.
 \end{split}
\end{equation}

We will show below that
\begin{equation}\label{eqmyzero}
\partial_y m_{x,y}=0 \text{ for a.e. } y\in (x,b) \text{ such that } |\{f=m_{x,y}\}\cap (x,y)|>0.
\end{equation}
The  vanishing of $\partial_y m_{x,y}$ has the following implications. First, together with~\cref{eqdymxy}, it implies $f(y)=m_{x,y}$. As a result, the last two terms in~\cref{eq:derivative_osc} vanish. In particular,~\cref{eq:derivative_osc} holds  provided we show that $\partial_y h(x,y,m_{x,y}) = 0$. We verify that the latter is also a consequence of $\partial_y m_{x,y} = 0$. Indeed, on the one hand, 
\begin{align*}
    |h(x,y,m_{x,y}) - h(x,y,m_{x,y'})| & \le \Lip(h(x,y,\cdot)) |m_{x,y} - m_{x,y'}|\\
    & = \mathrm{o}_{x,y}(|y-y'|).
\end{align*}
On the other hand, the fundamental theorem of calculus yields 
\begin{align*}
    h(x,y,m_{x,y'}) - h(x,y',m_{x,y'}) & = \int_y^{y'} \partial_{t} h(x,t,m_{x,y'}) \, dt  \\
    & = \int_y^{y'} |f(t) - m_{x,y'}| \, dt \\
    & \le \int_y^{y'} |f(t) - m_{x,y}| +\mathrm{o}(|y-y'|) \, dt.
\end{align*}
Since $f$ is continuous with $f(y) = m_{x,y}$, this entails 
\[
    |h(x,y,m_{x,y'}) - h(x,y',m_{x,y'})| = \mathrm{o}_{x,y}(|y - y'|).
\]
Gathering these two estimates we deduce
\[
    \frac{d}{dy} m_{x,y} = 0 \quad \Rightarrow \quad \frac{d}{dy} h(x,y,m_{x,y}) = 0.
\]
We conclude that~\cref{eqmyzero} conveys the validity of~\cref{eq:derivative_osc} for a.e. $y \in (x,b)$.

In order to prove \cref{eqmyzero}, we observe that for any $\alpha\in\R$ one has $\partial_y m_{x,y}=0$ almost everywhere on $\{y: m_{x,y}=\alpha\}$. At the same time, there are at most countably many $\alpha\in\R$ such that $| (a,b)\cap \{ f=\alpha\}|>0$.
{Since the countable union of negligible sets is negligible, we conclude that} $\partial_y m_{x,y}=0$ almost everywhere on $\{y: |\{f=m_{x,y}\}\cap (x,y)|>0\}$. 
This concludes the proof of \cref{eqmyzero}
and therefore of
\cref{eqodexyy}.

The argument for the other endpoint is identical,
using $\partial_x h(x,y,m)=-|f(x)-m|$ and $\partial_x m_{x,y}=(m_{x,y}-f(x))/(y-x)$ leads to
\cref{eqodexyx}
(alternatively, one applies \cref{eqodexyy} to $g(s)\coloneqq-f(b+a-s)$).

To prove \ref{lemmaoderem:1}, we observe that
all terms on the right-hand side of \cref{eqodexyy}, except possibly for $y \mapsto R((x,y))$, are continuous on $(x,b)$. Therefore, a point $y \in (x,b)$  is a (Lebesgue) continuity point of $|f'|$
if and only if it is a (Lebesgue) continuity point of $R((x,y))$. 
The  sought assertion then follows from the fact that $f$ is Lipschitz.

To prove \ref{lemmaoderem:2}, we observe that
by definition $|R(I)|\le |I|$, so the conclusion for a.e. $y \in (x,b)$ follows immediately from the fact that the left-hand side of \cref{eqodexyy} is nonnegative. The assertion for all $y \in (x,b)$ follows from the fact that both sides of the sought inequality are continuous on $(x,b)$ as a function of $y$.

This finishes the proof.
\end{proof}

\subsubsection{Monotonicity}

\begin{lemma}\label{lem:extremal} If $f : (a,b) \to \R$ is locally integrable and satisfies~\cref{eq:affine}, then 
one (and only one) of the  following conditions holds for any 
 $a < x < y < b$: 
    \begin{enumerate}
        \item
        $f(y)\ne m_{x,y}$,
                \label{item:2}
        \item $f$ is a constant on $(x,y)$. 
    \end{enumerate}
\end{lemma}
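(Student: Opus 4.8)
The plan is to read this dichotomy off the one-sided bound supplied by part~\ref{lemmaoderem:2} of \cref{lemmaode}, namely $\osc(f,(x,y)) \le 2\,|f(y) - m_{x,y}|$, together with the fact that $f$ is continuous (indeed locally Lipschitz by \cref{prop:Lip_regularity}).

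First I would verify that the two alternatives are mutually exclusive. If $f$ is constant on $(x,y)$, say $f \equiv c$ there, then $m_{x,y} = (f)_{(x,y)} = c$; since $f$ is continuous on $(a,b)$ it also takes the value $c$ at the endpoint $y$, so $f(y) = c = m_{x,y}$ and the alternative $f(y) \ne m_{x,y}$ fails. This already yields the ``at most one'' half of the statement.

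For the ``at least one'' half I would argue by contraposition: assuming $f(y) = m_{x,y}$, I would show that $f$ is constant on $(x,y)$. By part~\ref{lemmaoderem:2} of \cref{lemmaode} we get $\osc(f,(x,y)) \le 2\,|f(y) - m_{x,y}| = 0$, i.e.\ $\fint_{(x,y)} |f - m_{x,y}| \, dt = 0$, so $f = m_{x,y}$ at $\mathcal L^1$-a.e.\ point of $(x,y)$; the continuity of $f$ then promotes this to $f \equiv m_{x,y}$ on all of $(x,y)$, which is the second alternative.

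I do not expect a genuine obstacle here: the analytic content is already packaged into \cref{lemmaode}, and the only point deserving a word of care is the step from ``$f$ equals a constant $\mathcal L^1$-a.e.'' to ``$f$ equals that constant everywhere'', which is legitimate precisely because $f$ is continuous by \cref{prop:Lip_regularity}.
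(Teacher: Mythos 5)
Your argument is correct and is essentially the paper's own proof: the paper likewise assumes $f(y)=m_{x,y}$, invokes part~\ref{lemmaoderem:2} of \cref{lemmaode} to conclude $\osc(f,(x,y))=0$, and deduces that $f$ is constant on $(x,y)$. Your additional remarks (mutual exclusivity, and using continuity to upgrade the a.e.\ statement) are sound but not substantively different.
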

\begin{proof} Suppose that $f(y)=m_{x,y}$. By Lemma~\ref{lemmaode}, 
we have $\osc(f,(x,y))=0$, which can only occur if $f$ is constant on $(x,y)$. 
\end{proof}
\begin{corollary}[Monotonicity]
    If $f(x) = f(y)$ for some $x,y \in (a,b)$ with $x <y$, then $f$ is constant on $(x,y)$. In particular, $f$ is monotone.  
\end{corollary}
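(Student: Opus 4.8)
The plan is to prove the two assertions in turn: first that $f(x)=f(y)$ with $x<y$ forces $f$ to be constant on $(x,y)$, and then to obtain monotonicity from this by a short intermediate-value argument. Throughout I will use only that $f$ is continuous (indeed locally Lipschitz, by \cref{prop:Lip_regularity}) and that both $\osc(f,\cdot)$ and $|Df|$ are unchanged under the substitutions $f\mapsto -f$ and $f\mapsto\lambda-f$ for $\lambda\in\R$; in particular, any such modification of $f$ again satisfies \cref{eq:affine}, so that \cref{lem:extremal} applies to it as well.

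For the first assertion I would fix $x<y$ with $f(x)=f(y)=:c$ and argue by contradiction, assuming $f$ is not constant on $(x,y)$, equivalently $f\not\equiv c$ on $[x,y]$. The essential preliminary step is a localization. Since $f\not\equiv c$ on $[x,y]$, either $\min_{[x,y]}f<c$ or $\max_{[x,y]}f>c$; replacing $f$ by $2c-f$ if necessary, I may assume $\min_{[x,y]}f<c$, and I pick a minimizer $z_0$ of $f$ over $[x,y]$, so that $f(z_0)<c$ and hence $z_0\in(x,y)$. I then set
\[
  p:=\sup\{t\in[x,z_0]:f(t)=c\},\qquad q:=\inf\{t\in[z_0,y]:f(t)=c\}.
\]
Continuity of $f$ gives $f(p)=f(q)=c$; since $f(z_0)<c$ one has $p<z_0<q$; and by the definitions of $p,q$ the function $f$ never takes the value $c$ on $(p,z_0]\cup[z_0,q)=(p,q)$, so, being continuous on this interval with $f(z_0)<c$, it satisfies $f<c$ throughout $(p,q)$. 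The gain of this reduction is that now $m_{p,q}=\fint_p^q f<c=f(q)$, which is exactly the situation in which \cref{lem:extremal} has content.

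On $(p,q)$ I would then run a one-sided continuity/IVT argument. Consider the continuous function $\phi(t):=f(t)-m_{p,t}$ on $(p,q]$. On the one hand $\phi(q)=c-m_{p,q}>0$. On the other hand, since $z_0$ minimizes $f$ over $[x,y]\supseteq[p,z_0]$ we have $f\ge f(z_0)$ on $[p,z_0]$, while $f(p)=c>f(z_0)$ forces, by continuity, $\int_p^{z_0}f>f(z_0)(z_0-p)$, i.e.\ $m_{p,z_0}>f(z_0)$, so $\phi(z_0)<0$. The intermediate value theorem then gives $t_*\in(z_0,q)$ with $\phi(t_*)=0$, that is $f(t_*)=m_{p,t_*}$. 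Applying \cref{lem:extremal} to the interval $(p,t_*)$ --- which is compactly contained in $(a,b)$ since $a<x\le p<t_*<q\le y<b$ --- rules out its first alternative and hence forces $f$ to be constant on $(p,t_*)$; by continuity and $f(p)=c$ this means $f\equiv c$ on $[p,t_*]$, contradicting $z_0\in(p,t_*)$ and $f(z_0)<c$. Therefore $f\equiv c$ on $[x,y]$, which is the first assertion.

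For monotonicity I would argue once more by contradiction. A continuous function on an interval that is not monotone admits points $a<x_1<x_2<x_3<b$ with $f(x_2)\notin[\min\{f(x_1),f(x_3)\},\max\{f(x_1),f(x_3)\}]$; replacing $f$ by $-f$ if needed, assume $f(x_2)>\max\{f(x_1),f(x_3)\}$, pick $\lambda$ with $\max\{f(x_1),f(x_3)\}<\lambda<f(x_2)$, and use the intermediate value theorem to find $y_1\in(x_1,x_2)$ and $y_2\in(x_2,x_3)$ with $f(y_1)=f(y_2)=\lambda$; since $f(x_2)>\lambda$, $f$ is not constant on $(y_1,y_2)\ni x_2$, contradicting the first assertion. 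I expect the localization to be the main obstacle: the $\phi$-argument cannot be run directly on $(x,y)$ because the position of $m_{x,y}$ relative to $c=f(x)=f(y)$ is uncontrolled, so the strict inequality that activates \cref{lem:extremal} need not hold; passing to a subinterval of $(x,y)$ on which $f-c$ keeps a fixed strict sign is exactly what repairs this, and everything else (the behaviour of $t\mapsto m_{p,t}$, the endpoint values $f(p)=f(q)=c$) is then routine.
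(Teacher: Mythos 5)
Your proof is correct and shares the paper's core mechanism: argue by contradiction, apply the intermediate value theorem to a function of the form $t\mapsto f(t)-m_{\cdot,t}$ to locate a point where $f$ equals a running mean, and then invoke Lemma~\ref{lem:extremal} to force constancy and contradict the non-constancy assumption. The difference is in how the sign change is produced. The paper works directly on $(x,y)$: normalizing (via Lemma~\ref{lem:extremal} and a sign flip) so that $f(y)=f(x)<m_{x,y}\le M\coloneqq\max_{[x,y]}f$, it picks a maximizer $x_0$, notes $m_{x,x_0}\le f(x_0)$ while $m_{x,y}>f(y)$, and applies the IVT to $f-m_{x,\cdot}$ on $[x_0,y]$; the resulting constancy on $(x,y_0)$ forces $f(x)=M$, a contradiction. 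You instead localize first to a subinterval $(p,q)\subseteq(x,y)$ whose interior avoids the level $c=f(x)=f(y)$, and run the IVT for $\phi(t)=f(t)-m_{p,t}$ between a minimizer $z_0$ and $q$; this makes the strict sign change and the final contradiction ($z_0\in(p,t_*)$ with $f(z_0)<c$) completely explicit, at the cost of a few extra lines. You also spell out the standard passage from the level-set property to monotonicity of a continuous function, which the paper leaves as a one-line remark. Both arguments are sound; the paper's is a bit more compact, yours a bit more self-contained.
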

    \begin{proof} We argue through a contradiction argument by assuming that $f$ is non-constant on $(x,y)$.  In this situation it holds $\min_{[x,y]} f < m_{x,y} < \max_{[x,y]} f$. 
    By Lemma~\ref{lem:extremal}, $f(y)\ne m_{x,y}$.
    Up to multiplication by a sign, we may assume that 
    \begin{equation}\label{eq:max_contra}
    f(y) = f(x) < m_{x,y} \le M \coloneqq \max_{[x,y]} f.
    \end{equation}
    We may find $x_0 \in (x,y)$ satisfying $f(x_0) = M$. 
    Let $r(t) \coloneqq m_{x,t}$, so that
    $r(x_0) \le f(x_0)$, and $r(y) > f(y)$.
    By the mean value theorem, we may find $y_0 \in [x_0,y)$ satisfying $f(y_0) = r(y_0)$. Lemma~\ref{lem:extremal} implies that $f$ is constant on $(x,y_0)$ and therefore $f(x) = f(x_0) = M$, contradicting~\cref{eq:max_contra}.
    \end{proof}

    \section{Regularity of solutions}

    \subsection{$BV$-regularity of the derivative} An immediate consequence of the monotonicity and the optimality equations for $|f'|$, is the fact that $f'$ is a function with locally bounded variation.
    
    \begin{proposition} If $f : (a,b) \to \R$ is locally integrable and satisfies~\cref{eq:affine}, then
         $f' \in BV_{\loc}((a,b))$. In particular, the left and right (classical) derivatives of $f$ exist at every $x \in (a,b)$. 
    \end{proposition}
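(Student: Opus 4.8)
The plan is to reduce everything to the monotonicity of $f$ and to the one-sided optimality identity \cref{eqodexyy}. By Lemma~\ref{prop:Lip_regularity} the function $f$ is locally Lipschitz, so $f'\in L^\infty_{\loc}((a,b))$, and by the Monotonicity Corollary following Lemma~\ref{lem:extremal} the function $f$ is monotone; replacing $f$ by $-f$ if necessary---an operation that preserves \cref{eq:affine} by the affine-invariance property and preserves membership of $f'$ in $BV_{\loc}$---we may assume $f$ is non-decreasing, so that $f'\ge 0$ a.e. It therefore suffices to prove that $|f'|\in BV_{\loc}((a,b))$. Fix $x\in(a,b)$. A first, elementary consequence of monotonicity is that $y\mapsto m_{x,y}=\fint_x^y f$ is non-decreasing on $(x,b)$: by \cref{eqdymxy}, $\tfrac{d}{dy}m_{x,y}=(f(y)-m_{x,y})/(y-x)\ge 0$, since $f(t)\le f(y)$ for $t\le y$ gives $m_{x,y}\le f(y)$.

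The heart of the matter is to show that $y\mapsto R((x,y))$ has locally bounded variation on $(x,b)$. Writing $I_y\coloneqq(x,y)$, note that
\[
 R(I_y) = \bigl|\{f<m_{x,y}\}\cap I_y\bigr| + \bigl|\{f\le m_{x,y}\}\cap I_y\bigr| - (y-x),
\]
because $|\{f>m_{x,y}\}\cap I_y|=(y-x)-|\{f\le m_{x,y}\}\cap I_y|$. Now each of the two set-measure terms is \emph{non-decreasing} in $y$: if $x<y_1<y_2<b$ then $I_{y_1}\subseteq I_{y_2}$ and $m_{x,y_1}\le m_{x,y_2}$, so $\{f<m_{x,y_1}\}\cap I_{y_1}\subseteq\{f<m_{x,y_2}\}\cap I_{y_2}$, and likewise with $<$ replaced by $\le$. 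Hence $R((x,\cdot))$ is a sum of two non-decreasing functions minus a linear one, and in particular belongs to $BV([c,d])$ for every $x<c<d<b$; moreover $|R((x,y))|\le y-x$, so $y\mapsto R(I_y)/(y-x)$ is bounded (it takes values in $[-1,1]$) and of bounded variation on each such $[c,d]$.

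With this in hand, fix $x<c<d<b$ and read off \cref{eqodexyy}: for a.e.\ $y\in(c,d)$,
\[
 \tfrac14|f'(y)| = -\frac{\osc(f,I_y)}{y-x} + \frac{|f(y)-m_{x,y}|}{y-x} + \frac{R(I_y)}{y-x}\cdot\frac{f(y)-m_{x,y}}{y-x}.
\]
On $[c,d]$ the functions $y\mapsto\osc(f,I_y)$ (Lipschitz, by \cref{eq:osc_lip} and the recalled Lipschitz bound), $y\mapsto f(y)$ (Lipschitz), $y\mapsto m_{x,y}$ (monotone, hence $BV$) and $y\mapsto 1/(y-x)$ (Lipschitz, as $y-x\ge c-x>0$) are all bounded and of bounded variation, and $y\mapsto R(I_y)/(y-x)$ is bounded and $BV$ by the previous step. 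Since $BV([c,d])$ is an algebra, the right-hand side defines a function in $BV([c,d])$. Hence $|f'|$ agrees a.e.\ on $[c,d]$ with a $BV$ function; letting $c\downarrow x$, $d\uparrow b$ and then $x\downarrow a$ we conclude $|f'|\in BV_{\loc}((a,b))$, and therefore $f'=|f'|\in BV_{\loc}((a,b))$.

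Finally, the one-sided derivatives: $f$ is locally absolutely continuous (being locally Lipschitz), so $f(x+h)-f(x)=\int_x^{x+h}f'(t)\,dt$, and a function in $BV_{\loc}((a,b))$ possesses one-sided limits $f'(x^\pm)\coloneqq\lim_{h\to0^\pm}\fint_x^{x+h}f'(t)\,dt$ at every point $x\in(a,b)$; dividing the previous identity by $h$ and letting $h\to0^\pm$ shows that the left and right classical derivatives of $f$ exist at every $x$ and equal $f'(x^-)$, $f'(x^+)$. The step I expect to require the most care is the $BV$ bound on $R((x,\cdot))$: it is precisely there that the global monotonicity of $f$ (and the induced monotonicity of the running mean $m_{x,y}$) is indispensable, the point being that $|\{f>m_{x,y}\}\cap I_y|$ by itself is not monotone in $y$ and must be rewritten via its complement; everything after that is bookkeeping with the algebra structure of $BV$ on bounded intervals and with the explicit right-hand side of \cref{eqodexyy}.
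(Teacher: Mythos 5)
Your proof is correct and follows essentially the same route as the paper: reduce to the non-decreasing case via the Monotonicity Corollary, rewrite $R((x,\cdot))$ as a difference of non-decreasing functions and a linear term, and then read off the $BV$ regularity of $f'$ term-by-term from \cref{eqodexyy}, concluding with the standard one-sided-limit property of $BV$ functions. You are slightly more explicit than the paper in invoking the algebra structure of bounded $BV$ functions to handle the quotients and products on the right-hand side, but the underlying argument and the key decomposition of $R$ are identical.
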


\begin{proof}
Due to monotonicity, we may assume without loss of generality that $f$ is non-decreasing and hence that $f' \ge 0$ almost everywhere.
It suffices to show that $f'\in BV_{\loc}((x,b))$ for any $x\in (a,b)$.
We use \cref{eqodexyy} and prove that each term is in $BV_{\loc}((x,b))$. First, $y\mapsto \osc(f,(x,y))$ is locally Lipschitz, and hence in $BV_{\loc}$. The functions $y\mapsto f(y)$ and
$y\mapsto m_{x,y}$ are non-decreasing and hence in $BV_{\loc}$. Further,
\begin{equation*}
  R((x,y))=
 |(x,y)\cap \{f<m_{x,y}\}|
  + |(x,y)\cap \{f\le m_{x,y}\}|-(y-x),
\end{equation*}
and since the first two terms are non-decreasing in $y$, also $R((x,\cdot))\in  BV_{\loc}$. {Since $f' = |f'|$ is the sum of these three terms, we obtain $f' \in BV_{\loc}((x,b))$, as desired.}

We recall that functions of bounded variation of one variable possess left and right semicontinuous representatives (see~\cite[Theorem 3.28]{APF_Book}). We can use this show that the limits
\[
    f'(x_-) \coloneqq \lim_{y \to x_-} \frac{f(x) - f(y)}{x-y} \quad \text{and} \quad f'(x_+)  \coloneqq \lim_{y \to x_+} \frac{f(x) - f(y)}{x-y}
\]
exist for every $x \in (a,b)$.

Indeed, let $g$ be the left continuous representative of $f'$ and observe that 
\[
\lim_{\eps\to0}    \frac{f(x) - f(x - \eps)}{\eps} =\lim_{\eps\to0}   \fint_{x-\eps}^x f'(s) \, ds =  g(x) \,.
\]
We conclude that $f'(x_-)$ exists and equals $g(x)$. 
The proof for $f'(x_+)$ is analogous. 
\end{proof}

    \subsection{$C^1$ regularity of solutions}
    We now use the existence of one-sided derivatives to prove that $f$ is differentiable everywhere. In fact, we prove that $f$ is continuously differentiable.

    \begin{proposition}\label{prop:C1}
        If $f : (a,b) \to \R$ is locally integrable and  satisfies~\cref{eq:affine}, then $f$ is continuously differentiable on $(a,b)$. 
    \end{proposition}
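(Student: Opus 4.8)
The plan is to upgrade the one-sided differentiability already established into genuine $C^1$ regularity by ruling out jumps in $f'$. Since $f$ is monotone (say non-decreasing) and $f' \in BV_{\loc}$, the derivative $f'$ has at most countably many jump points, and at each $x_0 \in (a,b)$ both one-sided limits $f'(x_0^-)$ and $f'(x_0^+)$ exist. It therefore suffices to show $f'(x_0^-) = f'(x_0^+)$ for every $x_0$. The natural tool is the optimality identity \cref{eqodexyx}: fixing a right endpoint $y$ and letting the left endpoint $x \uparrow y$, all the ratios in that identity have controllable limits because $f$ is Lipschitz and $m_{x,y} \to f(y)$. More precisely, I would analyze the behaviour of the quantities $\osc(f,(x,y))/(y-x)$, $|f(x)-m_{x,y}|/(y-x)$, and $R((x,y))/(y-x)$ as $x \uparrow y$, using that near $y$ the function behaves to first order like $f(y) + f'(y^{\pm})(\cdot - y)$ depending on which side, i.e. like a (possibly broken) affine function with slopes $f'(y^-)$ on the left.

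The key computation is to evaluate both sides of \cref{eqodexyx} for a broken-linear profile. Suppose, for contradiction, that $f'$ has a jump at some point $z \in (a,b)$, with $p \coloneqq f'(z^-) < f'(z^+) \eqqcolon q$ (a downward jump is excluded by monotonicity giving $p \le q$, and equality is what we want). Apply \cref{eq:affine} on intervals $(z-s, z+s)$ centered at $z$ and differentiate, or more directly apply the limiting form of \cref{eqodexyy}/\cref{eqodexyx} at the endpoint $z$: since $f$ is Lipschitz and differentiable from each side at $z$, one can compute $\lim_{s\to 0^+}\osc(f,(z-s,z+s))/(2s)$, $\lim m_{z-s,z+s}$, and $\lim R((z-s,z+s))/(2s)$ explicitly in terms of $p$ and $q$ by reducing to the model function $x \mapsto p x^- + q x^+$ (here $x^\pm$ denote positive/negative parts), because the error between $f$ and this model near $z$ is $o(s)$ and the oscillation, mean, and $R$-functional are all Lipschitz-stable under $L^1$ perturbations of the profile at the relevant scale. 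Plugging these limits into \cref{eqodexyy} evaluated at the right endpoint $y = z+s$ as $s \to 0$ yields an algebraic identity relating $\tfrac14 q$ (the limit of $\tfrac14|f'(z+s)|$) to expressions in $p$ and $q$; symmetrically, using the left endpoint gives an identity involving $\tfrac14 p$. The claim is that this system forces $p = q$.

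Concretely, for the model $g(x) = px$ on $(-s,0)$ and $g(x) = qx$ on $(0,s)$ one computes the mean $m = \tfrac14(q-p)s$, then $\osc(g,(-s,s))$, $R((-s,s))$, and the endpoint values $g(\pm s) = \mp p s, q s$ by elementary integration; these are all homogeneous of degree $1$ in $s$, so the ratios in \cref{eqodexyy}–\cref{eqodexyx} have finite limits depending only on $p,q$. Substituting into the two limiting identities produces two polynomial equations $P(p,q)=0$ and $Q(p,q)=0$ whose only solutions with $p \le q$ are $p = q$. I would verify this by direct (if slightly tedious) algebra; since the paper's earlier arguments already reduced everything to one-variable $BV$ and Lipschitz bookkeeping, no new conceptual ingredient is needed. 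Once $f'(z^-) = f'(z^+)$ at every $z$, the $BV$ function $f'$ has no jump part; combined with the fact (to be argued, or cited from the structure of the optimality equations, since a nontrivial Cantor part in $f'$ would also have to be excluded — I would show $R((x,y))$ is actually absolutely continuous in $y$, hence so is $f'$ via \cref{eqodexyy}) that $f'$ has no Cantor part, we conclude $f' \in W^{1,1}_{\loc}$, in particular $f' \in C^0$, i.e.\ $f \in C^1((a,b))$.

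The main obstacle I anticipate is the algebraic step: correctly computing $\osc$ and $R$ for the broken-linear model and showing the resulting system of equations has $p=q$ as its unique admissible solution — it is easy to make sign errors in $R$, and one must be careful that the $o(s)$ error terms genuinely do not affect the limits (this uses the Lipschitz dependence of $h(x,y,\cdot)$ and of $\osc$ on the profile, already exploited in the proof of \cref{lemmaode}). A secondary subtlety is ensuring $f'$ has no Cantor part; I expect this follows because \cref{eqodexyy} expresses $f'$ as a sum of a locally Lipschitz term and terms built from $y \mapsto |(x,y) \cap \{f < m_{x,y}\}|$ and $y \mapsto |(x,y) \cap \{f \le m_{x,y}\}|$, and the continuity of $f$ together with monotonicity should make the level-set measures absolutely continuous in $y$, but this deserves a careful check.
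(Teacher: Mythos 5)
Your core strategy is the same as the paper's: at a putative jump of $f'$, approximate $f$ by a broken-linear model to first order, evaluate the optimality identities \cref{eqodexyy}--\cref{eqodexyx} on symmetric intervals $(z-s,z+s)$, and derive an algebraic relation between the one-sided slopes $p=f'(z^-)$ and $q=f'(z^+)$ that forces $p=q$. The paper does exactly this (with $\alpha,\beta$ in place of $p,q$): it subtracts the two endpoint identities, uses $4m_\eps = f(\eps)+f(-\eps)+\mathrm o(\eps)$ and the first-order expansion $t_\eps = \frac{\beta-\alpha}{4\beta}\eps + \mathrm o(\eps)$ for the level crossing, and obtains $\frac{\beta+\alpha}{2}\cdot\frac{\beta-\alpha}{4\beta}=0$, which forces $\alpha=\beta$. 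Your sketched computation of the mean $m=\frac{(q-p)s}{4}$ for the model is correct, and the $\mathrm o(s)$-stability of $\osc$, $R$, and $|Df|$ under uniform perturbations of the profile is exactly what justifies passing from the model to $f$; so the heart of your argument is sound and is the paper's argument.

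There is, however, one genuine misstep near the end. You claim that after ruling out jumps you must \emph{also} exclude a Cantor part of $Df'$ in order to conclude $f'\in C^0$, and you propose to do so by arguing that $y\mapsto R((x,y))$ is absolutely continuous. This is unnecessary, and you should not attempt it: a one-variable $BV$ function is continuous (in its precise representative) if and only if its derivative measure has no atoms --- the jump set of a $BV$ function of one variable is precisely the set of atoms of its derivative, and a non-atomic singular (Cantor) part in $Df'$ is perfectly compatible with $f'$ being continuous (the Cantor staircase is the standard example). The paper correctly cites this fact (\cite[Theorem~3.28]{APF_Book}) and stops after showing $Df'$ is non-atomic. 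Deleting the Cantor-part excursion from your plan removes a step that would be both redundant and, at this stage, hard to carry out (absolute continuity of $f'$ is not established until the later bootstrap in \cref{prop:partial_smoothness}, and only where $f'\ne0$). With that deletion, your proposal is essentially the paper's proof.
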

    \begin{proof}
        Let $x \in (a,b)$. Without loss of generality we may assume that 
    \[
    \text{$(-1,1) \subset (a,b)$, \quad $f' \ge 0$, \quad $x = 0$ \quad and \quad $f(0) = 0$.}
    \]
    The existence of one-sided derivatives implies that 
    \begin{equation}\label{eqfapproxpwaff}
        f(s) = \chi_{(-1,0)} \alpha s +  \chi_{(0,1)}\beta s + \mathrm{o}(s)\,,\qquad  s \in (-1,1)\,,
    \end{equation}
        where $\alpha \coloneqq f'(0_-)$, $\beta \coloneqq f'(0_+)$ are the left and right derivatives of $f$ at $x = 0$.
    Our goal is to show that $f$ is differentiable, i.e., $\alpha = \beta$.
    
 We shorten
 $m_\eps\coloneqq m_{-\eps,\eps}$. Since $f$ is nondecreasing, we obtain $f(-\eps) \le m_{\eps} \le f(\eps)$. Further,  integrating \cref{eqfapproxpwaff}, 
    \begin{equation}\label{eq:average_eps}
4m_{\eps}   = f(-\eps) + f(\eps) + \mathrm{o}(\eps) \text{ for every $\eps \in (0,1)$.}
    \end{equation}
    Re-writing~\cref{eqodexyy} and~\cref{eqodexyx} with  $I_\eps \coloneqq (-\eps,\eps)$ yields, for almost every $\eps \in (0,1)$, the equations
    \begin{align*}
        \frac 14 f'(\eps) & = - \frac{\osc (f,(-\eps,\eps))}{2\eps} + \frac{f(\eps) - m_{\eps}}{2\eps} + \frac{f(\eps) - m_{\eps}}{2\eps}\cdot \frac{R(I_{\eps})}{2\eps}\,, \\
        \frac 14 f'(-\eps) & = - \frac{\osc (f,(-\eps,\eps))}{2\eps} + \frac{m_{\eps} - f(-\eps)}{2\eps} + \frac{f(-\eps)-m_\eps }{2\eps} \cdot \frac{R(I_{\eps})}{2\eps}\,.
    \end{align*}
    Subtracting the second from the first one, we deduce that
    \begin{equation}\label{eqC1}
 \frac{f'(\eps) - f'(-\eps)}4  =
 \frac{f(\eps)+f(-\eps)-2m_\eps}{2\eps}
 + \frac{f(\eps)-f(-\eps)}{2\eps}
\cdot \frac{R(I_\eps)}{2\eps}
    \end{equation}
    for almost every $\eps \in (0,1)$. By
    \cref{eq:average_eps}, the first term  {on the RHS}
   equals $\frac{m_\eps}{\eps}+\mathrm{o}(1)$. In order to evaluate the second term, we need to compute $R(I_\eps)$.
    We first recall, {from} \cref{eqfapproxpwaff} and \cref{eq:average_eps}, {that}
    \begin{equation*}
    m_{\eps}  = \frac{f'(\eps) - f'(-\eps)}4 + \mathrm{o}(\eps)=\frac{\beta-\alpha}4 \eps + \mathrm{o}(\eps).
    \end{equation*}
If $\alpha=\beta$, we are done. If not, $\eps\mapsto m_\eps$ is, for small $\eps$, injective, therefore for almost every {(sufficiently small)} $\eps$
the set $\{f=m_\eps\}$ is a null set.  By monotonicity of $f$, this set is an interval, therefore for almost every $\eps$ it contains at most a single point.
    By continuity of $f$, there is a unique $t_\eps\in I_\eps$ such that
    $f(t_\eps)=m_\eps$.
    In particular, $R(I_\eps)=2t_\eps$.
    If $\alpha<\beta$ then for small $\eps$ we have
    $m_\eps>0$, so that $t_\eps>0$, and
     inserting in \cref{eqfapproxpwaff}
    leads to
    \begin{equation*}
     t_\eps=\frac{\beta-\alpha}{4\beta}\eps+\mathrm{o}(\eps)
    \end{equation*}
for almost all $\eps$.
    {Inserting this into \cref{eqC1} gives us:
\begin{equation*}
  \frac{\beta-\alpha}{4} = \frac{\beta-\alpha}{4}+\frac{\beta+\alpha}{2} \frac{t_\eps}{\eps} + \mathrm{o}(1).
\end{equation*}
Rearranging terms and inserting the value of $t_\eps$ this becomes
\begin{equation*}
0=\frac{\beta+\alpha}{2}\cdot \frac{\beta-\alpha}{4\beta} + \mathrm{o}(1),
\end{equation*}
which, since $\alpha$ and $\beta$ are nonnegative, implies $\alpha=\beta$.
If $\beta<\alpha$ instead, a similar calculation yields:
\begin{equation*}
  t_\eps=\frac{\beta-\alpha}{4\alpha}\eps+\mathrm{o}(\eps),
\end{equation*}
and the conclusion follows in the same manner.}

    This argument proves that $f$ is differentiable everywhere on $(a,b)$. In particular, 
    the fact that $f'(x_-)=f'(x_+)$
    shows that $Df'$ is non-atomic. By the classical theory $BV$ functions of one variable (see, e.g.,~\cite[Theorem~3.28]{APF_Book}), it follows that $f'$ is continuous on $(a,b)$ and hence $f$ is continuously differentiable  there too.
    \end{proof}

    \subsection{Partial smoothness of solutions}
    Once we have shown solutions are continuously differentiable, we can easily bootstrap their regularity through the optimality condition in~\cref{eqodexyy}. In particular, we show next that solutions of~\cref{eq:affine} are smooth:

    \begin{proposition}[Partial smoothness]\label{prop:partial_smoothness} Let $f : (a,b) \to \R$ be a locally integrable function satisfying~\cref{eq:affine}.
         Suppose that $f'(x_0) \neq 0$ for some $x_0 \in (a,b)$. Then, $f$ is smooth in an open neighborhood of $x_0$.  
    \end{proposition}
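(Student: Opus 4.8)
The plan is to freeze the left endpoint in the optimality identity \cref{eqodexyy} and read it as an \emph{explicit} formula for $f'$ whose right-hand side turns out to be one derivative smoother than $f$ itself, and then bootstrap.

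First I would localise. By \cref{prop:C1} the derivative $f'$ is continuous, and since $f'(x_0)\neq0$ we may assume (replacing $f$ by $-f$ if necessary) that $f'>0$ on an open interval $U\ni x_0$; fix $x_1<x_0<y_1$ with $[x_1,y_1]\subset U$. On $U$ the function $f$ is strictly increasing, so for every $y\in(x_1,y_1]$ there is a unique $t(y)\in(x_1,y)$ with $f(t(y))=m_{x_1,y}$; moreover $f(y)>m_{x_1,y}$, the set $\{f=m_{x_1,y}\}\cap(x_1,y)$ is a singleton, hence
\[
  \bigl|f(y)-m_{x_1,y}\bigr|=f(y)-m_{x_1,y},\qquad R\bigl((x_1,y)\bigr)=2t(y)-x_1-y,
\]
and, using $\int_{x_1}^y(f-m_{x_1,y})=0$ and splitting the oscillation integral at $t(y)$,
\[
  \osc\bigl(f,(x_1,y)\bigr)=\frac{2}{y-x_1}\Bigl[\bigl(t(y)-x_1\bigr)\,m_{x_1,y}-\int_{x_1}^{t(y)}f(s)\,ds\Bigr].
\]
Substituting these three identities into \cref{eqodexyy} with $x=x_1$ gives, for a.e.\ $y\in(x_1,y_1)$,
\begin{equation}\label{eq:closed}
  \frac14 f'(y)=\Phi\Bigl(y,\;f(y),\;m_{x_1,y},\;t(y),\;\int_{x_1}^{t(y)}f(s)\,ds\Bigr),
\end{equation}
where $\Phi$ is a fixed rational function of its arguments whose denominator is a power of $y-x_1$ (in particular smooth and non-vanishing on $(x_1,y_1)$).

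Then I would induct on $k\ge1$ to show $f\in C^k\bigl((x_1,y_1)\bigr)$; the base case $k=1$ is \cref{prop:C1}. For the step, assume $f\in C^k\bigl((x_1,y_1)\bigr)$. From \cref{eqdymxy}, $\frac{d}{dy}m_{x_1,y}=\frac{f(y)-m_{x_1,y}}{y-x_1}$; starting from the continuity of $y\mapsto m_{x_1,y}$ and iterating this identity shows $m_{x_1,\cdot}\in C^{k+1}\bigl((x_1,y_1)\bigr)$. Next, $t(\cdot)$ solves $f(t(y))-m_{x_1,y}=0$ with $\partial_t\bigl(f(t)-m_{x_1,y}\bigr)=f'(t)>0$ on $U$, and the left-hand side is jointly $C^k$, so the implicit function theorem yields $t(\cdot)\in C^k\bigl((x_1,y_1)\bigr)$; consequently $y\mapsto\int_{x_1}^{t(y)}f(s)\,ds$, being the $C^{k+1}$ antiderivative $u\mapsto\int_{x_1}^u f$ composed with the $C^k$ map $t(\cdot)$, is $C^k$. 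Plugging these into \cref{eq:closed} makes its right-hand side $C^k$ on $(x_1,y_1)$; since $f'$ is continuous and the identity holds a.e., it holds everywhere, whence $f'\in C^k\bigl((x_1,y_1)\bigr)$, i.e.\ $f\in C^{k+1}\bigl((x_1,y_1)\bigr)$. Letting $k\to\infty$ gives $f\in C^\infty$ on the neighbourhood $(x_1,y_1)\ni x_0$.

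I expect the only real subtlety to be the regularity of the level-crossing point $t(y)$: the other building blocks of \cref{eqodexyy} — the average, the oscillation, and $\int_{x_1}^{t(y)}f$ — gain a derivative through integration, whereas $t(y)$ amounts to inverting $f$, and its $C^k$-regularity is exactly what the non-degeneracy $f'(x_0)\neq0$ provides via the implicit function theorem; this is the one place where the hypothesis enters. The remaining points — that on $U$ the set $\{f=m_{x_1,y}\}\cap(x_1,y)$ is a single point, so the formulas for $|f(y)-m_{x_1,y}|$ and $R((x_1,y))$ hold with no exceptional set, and that the a.e.\ identity \cref{eqodexyy} upgrades to an everywhere identity once its right-hand side is known continuous — are routine.
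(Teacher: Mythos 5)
Your proposal is correct and takes essentially the same route as the paper: both localize to a neighbourhood where $f'>0$, read \cref{eqodexyy} with the left endpoint frozen as an explicit formula for $f'$, express the oscillation and $R$ via the level-crossing point $t(y)=f^{-1}(m_{x_1,y})$ (you invoke the implicit function theorem for $f(t)-m_{x_1,y}=0$, the paper the inverse function theorem for $f^{-1}$, which is the same gain of regularity), and then bootstrap $C^k\Rightarrow C^{k+1}$. The only differences are notational and in the bookkeeping of which pieces gain one derivative; the mathematical content is identical.
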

      \begin{proof} Recalling that $f$ is of class $C^1$, we may assume $f ' > 0$ on $I = (a',b') = (x_0-\eps,x_0 +\eps) \subset \subset (a,b)$ for some $\eps = \eps(x_0)>0$.  
            
       Let $k \ge 0$ be an arbitrary nonnegative integer. We proceed by induction on the order of differentiability: it suffices to show that if $f \in C^{k}(I)$, then $f \in C^{k+1}(I)$. The base case, $k=1$, follows from \cref{prop:C1}.  

      Assume that $f \in C^{k}(I)$. By the inverse function theorem, $f$ is invertible on $I$ with a $C^k$ inverse $f^{-1}$.   Casting into~\cref{eqodexyy} the fact that $f(x) > m_{a',x} $ for every $x \in I$, we can express $f'(x)$  as
      \[
            \frac {f'(x)}{4} = - \frac{\osc(f,(a',x))}{x -a'} + \frac{f(x) - m_{a',x}}{x - a'} + \frac{f(x) - m_{a',x}}{x - a'} \cdot\frac{R((a',x))}{x - a'}\,.
      \]
Both sides are continuous, therefore they are equal everywhere.
Let us analyze the regularity of the terms in the right-hand side. Since $f$ is of class $C^k$, it follows that the second term is of class $C^k$ as well. On the other hand, the fact that $f$ is invertible on $I$ implies that the third term is also of class $C^k$. Indeed,  since
      \begin{align*}
          |\{f < m_{a',x} \}\cap (a',x)| & = f^{-1}(m_{a',x}) - a'\,,\\
            |\{f > m_{a',x} \}\cap (a',x)| & = x - f^{-1}(m_{a',x})\,,
      \end{align*}
      it follows that $R((a',x))$ can be written as sum of an affine transformation and  $x \mapsto 2f^{-1} \circ m_{a',x}$; the latter being a $C^k$ map.

To deal with the regularity of the first term, we write
\begin{equation*}\begin{split}
&(x-a') \osc(f,(a',x))=
\int_{a'}^x |f(s)-m_{a',x}| ds\\
&\hskip1cm=
\int_{a'}^{f^{-1}(m_{a',x})} (f(s)-m_{a',x}) ds+
\int_{f^{-1}(m_{a',x})}^x (m_{a',x}-f(s)) ds,
\end{split}\end{equation*}
which by the same argument is a $C^k$ map.

      This shows the right-hand side is of class $C^k$ on the interval $I$, and hence $f\in C^{k+1}(I)$. This finishes the proof. \end{proof}

\section{Proof of the main result}
    \subsection{Affine rigidity under smoothness}
    Here, we show that~\cref{thm:1} holds under the additional assumption that $f$ is smooth and strictly monotone: 
    
    \begin{proposition}\label{prop:Taylor}
  If $f : (a,b) \to \R$ is a smooth function satisfying~\cref{eq:affine} and {$f' \neq 0$ in $(a,b)$}, then $f$ is affine.
 \end{proposition}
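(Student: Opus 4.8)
By the affine invariance recorded in Section~2.1 together with the monotonicity established above, I would assume throughout that $f$ is smooth with $f'>0$ on $(a,b)$ (replace $f$ by $-f$ if $f'<0$). Then $|Df|((x,y))=f(y)-f(x)$, so by \cref{eq:affine} the hypothesis is equivalent to the family of identities
\[
  \int_x^y |f(t)-m_{x,y}|\,dt \;=\; \tfrac14\,(y-x)\,(f(y)-f(x)),\qquad a<x<y<b,
\]
and, since $f$ is strictly increasing, $t\mapsto f(t)-m_{x,y}$ is negative on $(x,t_\ast)$, positive on $(t_\ast,y)$ with $t_\ast=t_\ast(x,y):=f^{-1}(m_{x,y})\in(x,y)$, and has vanishing signed integral on $(x,y)$, so this reads $2\int_{t_\ast}^{y}(f(t)-m_{x,y})\,dt=\tfrac14(y-x)(f(y)-f(x))$. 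Now fix an arbitrary $x_0\in(a,b)$; the goal is to show $f''(x_0)=0$. Using the affine invariance to normalize $x_0=0$, $f(0)=0$, $f'(0)=1$, I would apply this relation to the shrinking intervals $(-\sigma s,\tau s)$ with $\sigma,\tau>0$ fixed and $s\downarrow0$, expanding $m_{x,y}$, the local inverse $f^{-1}$ (hence $t_\ast$), and the integral as power series in $s$. Since the identity holds for every $s$ and every $\sigma,\tau$, each coefficient of $s^k$ must vanish as a polynomial in $(\sigma,\tau)$; the coefficients of $s^2$ and $s^3$ vanish identically (this reflects that affine functions saturate the Poincar\'e--Wirtinger quotient $\tfrac14$ to high infinitesimal order, and carries no information), while the coefficient of $s^4$ equals a fixed nonzero polynomial in $(\sigma,\tau)$ times $f'''(0)-\tfrac13 f''(0)^2$. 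Thus $f'''(0)=\tfrac13 f''(0)^2$; as $x_0$ is arbitrary (and unnormalizing), $f$ solves $f'''=(f'')^2/(3f')$ on $(a,b)$.

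Next I would integrate this ODE. Writing it as $\tfrac{d}{dx}\bigl(f''\,(f')^{-1/3}\bigr)=0$ and integrating, either $f''\equiv0$ on $(a,b)$ — in which case $f$ is affine and we are done — or $f''$ is nowhere zero (if $f''$ vanished at a single point, ODE uniqueness for the system $u'=v$, $v'=v^2/(3u)$ with $u=f'>0$ would force $f''\equiv0$ near it, hence on all of $(a,b)$ by connectedness). In the latter case $f''=C(f')^{1/3}$ with $C\neq0$, hence $f'(x)=(\lambda x+\mu)^{3/2}$ for constants $\lambda\neq0$, $\mu$ with $\lambda x+\mu>0$ on $(a,b)$, and therefore $f(x)=A(\lambda x+\mu)^{5/2}+B$ for some $A\neq0$, $B$.

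It remains to contradict the identity in this second alternative; this is the only step using the relation at macroscopic scale. Composing with affine maps in the domain and the range and then rescaling the domain, one reduces to $f(t)=t^{5/2}$ on an interval $(1,c)$ with $c>1$, and by homogeneity of $t^{5/2}$ it suffices to contradict the identity on the intervals $(1,r)$, $r\in(1,c)$. There $m:=m_{1,r}=\tfrac{2(r^{7/2}-1)}{7(r-1)}$, and since $t_\ast=m^{2/5}$ satisfies $t_\ast^{7/2}=m\,t_\ast$, the left-hand side computes in closed form and the identity becomes
\[
  \tfrac47\,r^{7/2}-2m\,r+\tfrac{10}{7}\,m^{7/5}\;=\;\tfrac14\,(r-1)(r^{5/2}-1),\qquad r\in(1,c).
\]
Both sides are real-analytic functions of $u=r^{1/2}$ for $u>1$, so the identity propagates to all $r>1$; comparing leading behavior as $r\to\infty$, where the left side is $\tfrac{10}{7}(\tfrac27)^{7/5}r^{7/2}+o(r^{7/2})$ and the right side is $\tfrac14 r^{7/2}+o(r^{7/2})$, one would need $\tfrac{10}{7}(\tfrac27)^{7/5}=\tfrac14$, i.e.\ $128\cdot40^5=7^{12}$, which is false. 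This contradiction rules out the second alternative, so $f$ must be affine.

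The step I expect to be the main obstacle is the Taylor expansion leading to the ODE: one must expand $m_{x,y}$, the inverse $f^{-1}$, and the integral in the basic identity to fourth order, keep track of the dependence on the interval shape $(\sigma,\tau)$, and verify both the automatic cancellation at orders $2$ and $3$ and the precise form of the order-$4$ coefficient. The ODE integration and the concluding asymptotic argument are comparatively short once that computation is carried out.
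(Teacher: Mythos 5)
Your proposal is correct and follows essentially the same strategy as the paper: Taylor-expand the oscillation identity at a point to extract the ODE $3f'f'''=(f'')^2$, integrate it to conclude $f$ is affine or $f(x)=A(\lambda x+\mu)^{5/2}+B$, and rule out the power solution via real-analytic continuation to a degenerate interval. Two small remarks: the paper works with \emph{symmetric} intervals $(x_0-\eps,x_0+\eps)$, which forces both the mean and the crossing point to be $O(\eps^2)$ and kills all odd powers of $\eps$ in the average, making the Taylor bookkeeping you flag as ``the main obstacle'' far lighter than with your asymmetric $(-\sigma s,\tau s)$ (whose extra freedom in $(\sigma,\tau)$ you never use — only nonvanishing at one pair, e.g.\ $\sigma=\tau$, is needed, and that is precisely the paper's computation); and your $r\to\infty$ check $128\cdot 40^5\ne 7^{12}$ is numerically the very same equation as the paper's endpoint evaluation $8s=(1+s)^{2+1/s}$ at $s=5/2$, both reducing to $2^{22}\cdot 5^5\ne 7^{12}$.
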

\begin{proof} Let $k$ be a positive integer, which we shall fix later in the proof. 

First, let $x_0 \in (a,b)$. In the following we let $\eps>0$ be a small positive real and consider the interval $I_\eps\coloneqq(x_0-\eps, x_0+\eps) \subset\subset (a,b)$. Since~\cref{eq:affine} is invariant under the composition {with affine transformations, we may henceforth assume that $f(x_0) = 0$ and $f' > 0$.} Provided that $\eps$ is sufficiently small, the $k$th order Taylor series expansion of $f$ exists on $I_\eps$ and satisfies 
 \begin{equation*}
  f(x_0+t)=\sum_{j=1}^k A_j t^j + \mathrm O(t^{k+1})\,, \qquad t \in (-\eps,\eps)\,, A_1 \ne 0\,.
 \end{equation*}
Computing the average as a function of $\eps$ we obtain 
 \begin{equation*}
  (f)_{I_\eps} = g(\eps) \coloneqq \sum_{j=2}^k\frac{A_j}{j+1}  \eps^j \chi^{\mathrm{even}}(j) 
  + \mathrm O(\eps^{k+1}) \,,
 \end{equation*}
 where $\chi^{\mathrm{even}}: \mathbb Z \to \{0,1\}$ is the indicator function of the even integers $2\mathbb Z$. 
 Here, the odd powers on the right-hand side disappear because the average on $I_\eps$ of any odd function is zero.
By the strict monotonicity of $f$ on $I_\eps$ for sufficiently small $\eps$, the equation $f(x_0 + s)= (f)_{I_\eps}$ has a unique solution  for $s \in (-\eps,\eps)$, which we shall denote by $\rho$. It obeys
 \begin{equation}\label{eqrhofroimeps}
  \sum_{j=1}^k A_j \rho^j + \mathrm O(\rho^{k+1})
  =\sum_{j=2}^{k} \frac{A_j}{j+1}  \eps^j \chi^\mathrm{even}(j)
  +\mathrm O(\eps^{k+1}).
 \end{equation}
 Therefore $\rho$ is of order $\eps^2$. Indeed, letting $k\ge 3$ we obtain
 \begin{equation}\label{eqrhok2}
 \rho = \frac{A_2}{3A_1}\eps^2  + \mathrm O(\eps^4).
\end{equation}
Notice that $I_\eps\cap \{f>(f)_{I_\eps}\}=(x_0+\rho,x_0+\eps)$. Therefore, casting the expressions for $f$ and $(f)_{I_\eps}$ above
and using that for any interval $I$
\begin{equation*}
\int_I |f-(f)_I|\, dx=2\int_{I\cap\{f>(f)_I\}} (f-(f)_I) \, dx
=2\int_{I\cap\{f<(f)_I\}} ((f)_I -f)\, dx
\end{equation*}
yields
\begin{equation*}\begin{split}
 \int_{I_\eps} |f-(f)_{I_\eps}| \, dx & = 2\int_{x_0+\rho}^{x_0+\eps} (f-(f)_{I_\eps}) \, dx \\
 & = 2\int_{x_0+\rho}^{x_0+\eps} f \, dx  
 -2(\eps-\rho) (f)_{I_\eps}\,.
\end{split}
\end{equation*}
The oscillation $h_\eps \coloneqq \osc(f,I_\eps)$ can therefore be expressed as
\begin{equation*}\begin{split}
h_\eps = \sum_{j=1}^k \frac{A_j}{j+1} (\eps^j-\frac{\rho^{j+1}}{\eps})
 -(1-\frac\rho\eps)  \sum_{j=2}^{k} \frac{A_j}{j+1}  \eps^j \chi^\mathrm{even}(j)
 +\mathrm O(\eps^{k+1}) .
\end{split}\end{equation*}
Inserting~\cref{eqrhofroimeps} into this expression we get
\begin{equation*}
\begin{split}
h_\eps = & \sum_{j=1}^k \frac{A_j}{j+1} \eps^j
\chi^\mathrm{odd}(j)
-
\frac \rho \eps \sum_{j=1}^k  \left(\frac{A_j}{j+1} \rho^{j}
-   \frac{A_j}{j+1}  \eps^j \chi^\mathrm{even}(j)\right)+\mathrm O(\eps^{k+1})\\ 
     = & \sum_{j=1}^k \frac{A_j}{j+1} \eps^j
\chi^\mathrm{odd}(j)
- \frac \rho \eps \sum_{j=1}^k \left( \frac{A_j}{j+1} \rho^{j}
-  A_j  \rho^j \right)
 +\mathrm O(\eps^{k+1}) \\
    = & \sum_{j=1}^k \frac{A_j}{j+1} \eps^j
\chi^\mathrm{odd}(j)
+
\frac \rho \eps \sum_{j=1}^k \frac{j A_j}{j+1} \rho^{j}
 +\mathrm O(\eps^{k+1})\,,
\end{split}\end{equation*}
where $\chi^\mathrm{odd} = 1 - \chi^\mathrm{even}$. At this point we choose $k=4$, and use the expansion for $\rho$ in \cref{eqrhok2} to get
\begin{equation*}\begin{split}
h_\eps =&
\frac{A_1}{2}\eps + 
\left(\frac{A_3}{4}
+\frac{A_2^2}{18A_1}\right)\eps^3
+\mathrm O(\eps^5)\,,
\end{split}\end{equation*}
which {(using the monotonicity of $f$ on $I_\eps$)} by assumption is to be compared with 
\begin{equation*}
 \frac14 |Df|(I_\eps)=
 \frac{{f(\eps)-f(-\eps)}}{4}
 =
 \frac12 \sum_{j=1}^k A_j\eps^j \chi^\mathrm{odd}(j)+\mathrm O(\eps^{k+1})\,.
\end{equation*}
Therefore, for $k=4$ we obtain
\begin{equation*}\begin{split}
&
\frac{A_1}{2}\eps +
\left(\frac{A_3}{4}
+\frac{A_2^2}{18A_1}\right)\eps^3
=\frac{A_1}2 \eps + \frac{A_3}2  \eps^3 + \mathrm O(\eps^5)\,.
\end{split}\end{equation*}
Since this holds for every sufficiently small $\eps$, we deduce that $9A_3A_1=2A_2^2$. Inserting the Taylor coefficient rule $A_j=f^{(j)}(x_0)/j!$, this translates into
\begin{equation*}
3 f'(x_0) f'''(x_0)=(f''(x_0))^2\,.
\end{equation*}

It follows that $f'$ satisfies the ordinary differential equation
\begin{equation*}\label{eq:g}
 3g(t) g''(t)=(g'(t))^2 \qquad \text{for all $t \in (a,b)$.}  
\end{equation*}
This equation has two types of solutions: constants and functions of the form
\begin{equation*}
 g(x)=A(x-B)^{3/2}
\end{equation*}
for any $A, B\in\R$. From this, {monotonicity} and smoothness we deduce that $f$ must either be affine or of the form
\[
    f(x) = A(x - B)^{\frac 52}+C, \qquad A \neq 0, B \notin {(a,b)}, C\in\R\,.
\]
We claim that $f$ cannot be of the latter form. Indeed, due to the invariance of~\cref{eq:affine} under affine transformations on the domain and co-domain, the claim is equivalent to observing that $x^{\frac 52}$ is not a solution of~\cref{eq:affine} on any interval  
 $(\alpha,\beta)$ with $0 < \alpha < \beta$. 
 We show that if the function $f(x)=x^s$ solves~\cref{eq:affine} on some interval $(\alpha,\beta)$ with $0 <\alpha<\beta$ then $s$ cannot be $5/2$. We can assume $s>0$. Our assumption means that the function 
\begin{equation}\begin{split}
 \varphi(a,b)\coloneqq&\frac{f(b)-f(a)}{4}-\frac{2}{b-a}
 \int_a^{x(a,b)}
 f(x(a,b))-f(z) \, dz, \text{ where}\\ x(a,b)\coloneqq&\left(\frac{1}{b-a} 
 \int_a^b f(z) \, dz\right)^{1/s}, 
\end{split}\end{equation}
vanishes
for any $a$, $b$ with $0<\alpha<a<b<\beta$. {Since the integral of a real-analytic function is also real-analytic,} it follows that both $x$ and $\varphi$ are real-analytic on $0<a<b$. {We} obtain $\varphi=0$ on $0<a<b$, {and, by continuity,} also on $0\le a<b$. 
Computing
\begin{equation}\begin{split}
x(0,b)=&\frac{b}{(1+s)^{1/s}}, \\
 \varphi(0,b)=&\frac14 b^s
 -\frac2b \left[x(0,b)^{1+s}-\frac{x(0,b)^{1+s}}{1+s}\right]
 =b^s\left[\frac14 -\frac{2s}{(1+s)^{2+1/s}}\right],
\end{split}\end{equation}
 one checks that 
$8s=(1+s)^{2+1/s}$ is not solved by $s=5/2$ (but {it} is, of course, solved by $s=1$).

We conclude that $f$ must be affine.
\end{proof}

    Endowed with the full $C^1$-regularity of solutions (\cref{prop:C1}), the partial smoothness of solutions on the set of points with non-vanishing derivative (\cref{prop:partial_smoothness}) and the validity of~\cref{thm:1} for smooth {maps with non-zero derivative} (\cref{prop:Taylor}), we can now prove the main result:\\

\noindent \textbf{Theorem 1.}\emph{
 Let $f : (a,b) \to \R$ be a locally integrable function satisfying~\cref{eq:affine}. Then, $f$ is affine. 
}

\begin{proof} Let $U \subset (a, b)$ be the set of points where $f'$ is nonzero. Since $f$ is $C^1$ on $(a, b)$, $U$ is an open set. We distinguish two cases: a) If $U$ is empty, then $f' \equiv 0$ on $(a, b)$, and hence $f$ is constant; b) If $U$ is non-empty, then it is the union of disjoint open intervals. Let $I$ be one such interval. By~\cref{prop:Taylor}, $f$ is affine on $I$. Since $f'$ is continuous (\cref{prop:C1}) on $(a,b)$, and constant and non-zero on $I$, it follows that $f' \neq 0$ on $\partial I$ and hence $I$ must be the entire interval $(a, b)$. In either case, $f$ is affine. \end{proof}

The proof of~\cref{thm:2} follows directly from~\cref{thm:1} and~\cref{prop:Lip_regularity}.

\subsection*{Acknowledgments} 
This work was partially supported
by the Deutsche Forschungsgemeinschaft (DFG) through project SFB1060 - 211504053
and project CRC1720 - 539309657.
AAR was also supported by the European Union European Research Council through the ERC Starting Grant ConFine
(no. 101078057).

\printbibliography

\end{document}